\documentclass[13pt,reqno]{amsart}
\UseRawInputEncoding

\usepackage{amsmath,amssymb,amsthm}
\usepackage{bm}
\usepackage{graphicx}
\usepackage{color}
\usepackage{float}
\usepackage{url}
\usepackage{stmaryrd}
\usepackage{mathrsfs}
\usepackage{caption}
\usepackage{url}
\newcommand{\bel}[1]{\begin{equation*}\label{#1}}
	
	\newcommand{\be}{\begin{equation}}

		\newcommand{\ba}{\begin{eqnarray}}
			\newcommand{\ea}{\end{eqnarray}}

		\newcommand{\qe}{\end{equation}}
	\newcommand{\R}{{\mathbb R}}

	\newcommand{\eg}{\begin{example}}
		\newcommand{\egd}{\end{example}}
	\newcommand{\tm}{\begin{thm}}
		\newcommand{\tmd}{\end{thm}}
	\newcommand{\co}{\begin{coro}}
		\newcommand{\cod}{\end{coro}}
	\newcommand{\enu}{\begin{enumerate}}
		\newcommand{\enud}{\end{enumerate}}
	\newcommand{\rmk}{\begin{rem}}
		\newcommand{\rmkd}{\end{rem}}
	
	\theoremstyle{theorem}
	\newtheorem{thm}{Theorem}[section]
	\newtheorem{prop}[thm]{Proposition}
	\theoremstyle{example}
	\newtheorem{example}[thm]{Example}
	\newtheorem{coro}[thm]{Corollary}
	\theoremstyle{lemma}
	\newtheorem{lemma}[thm]{Lemma}
	\theoremstyle{definition}
	\newtheorem{defi}[thm]{Definition}
	\theoremstyle{proof}
	
	\theoremstyle{remark}
	\newtheorem{rem}[thm]{Remark}
	\theoremstyle{remark}

	\UseRawInputEncoding

\begin{document}
		
		\title[Local existence of strong solutions to the capillary wave kinetic equation]{Local existence of strong solutions to the capillary wave kinetic equation}
		\author{Jiajun Wang}
		\address{Jiajun Wang: Courant Institute of Mathematical Sciences,
			New York University, New York, NY}
		\email{jw9409@nyu.edu}
		
		\begin{abstract}
			In this paper, we follow Pan-Wu's idea \cite{pan2026local} to prove local existence of the capillary wave kinetic equation. To be more specific, we will linearize the nonlinear operator and decompose it into dissipative part and bounded part. The main difficulty comes from the fact that the collision kernel has less symmetry compared with the one in gravity water wave. To overcome it, we are required to choose and pair each term delicately.
		\end{abstract}
		
		\maketitle
		\vspace{-0.7cm}
		\numberwithin{equation}{section}
		
		\section{Introduction}
	Wave turbulence theory (WTT) is a statistical theory describing weakly nonlinear dispersive waves and their interactions. In the weakly nonlinear regime, nonlinear effects induce stochasticity in the wave phases together with a slow modulation of the amplitudes, allowing for a statistical description of large ensembles of interacting waves. Over the past several decades, WTT has undergone extensive development.
	
	The wave kinetic equation (WKE) forms the foundation of WTT, governing the long-time evolution of the wave action spectrum in momentum space. Recently, the rigorous derivation and justification of WKE have attracted unprecedented attention from the mathematical community. A series of significant advances have been made by Deng and Hani in \cite{deng2021derivation,deng2023full,deng2023long}. For a more detailed introduction to these developments, we refer the reader to \cite{pan2026local} and the references therein.
		
	In general, WKE is typically classified into three-wave and four-wave types. Historically, the first kinetic equation for weak turbulence was a three-wave kinetic equation, derived in the context of phonon interactions in anharmonic crystal lattices \cite{peierls1929kinetischen,peierls1955quantum}. The kinetic equation for capillary waves also takes the form of a three-wave kinetic equation.

	In particular, the capillary wave kinetic equation (\(\omega_k=|k|^{3/2}\)) is given by

		\begin{equation}\label{Capillary}
			\left\{
			\begin{aligned}
				\frac{\partial n_k}{\partial t}
				&=S(n_k), \qquad n_k(0)=n_0(k)\ge0,\\
				S(n_k)
				&=
				\iint_{\R^{2d}}
				\left(
				R_{k k_1 k_2}
				-
				R_{k_1 k k_2}
				-
				R_{k_2 k k_1}
				\right)
				\,dk_1\,dk_2,
			\end{aligned}
			\right.
		\end{equation}
		where the collision integrand is defined as 
		
		\begin{equation*}
			R_{k k_1 k_2}
			=
			4\pi
			|V_{k k_1 k_2}|^2
			\delta(k-k_1-k_2)
			\delta(\omega_k-\omega_{k_1}-\omega_{k_2})
			\left(
			n_{k_1}n_{k_2}
			-
			n_kn_{k_1}
			-
			n_kn_{k_2}
			\right),
		\end{equation*}
		
		\begin{equation*}
			V_{k k_1 k_2}
			=
			\frac{1}{8\pi\sqrt{2}}
			(\omega_k\omega_{k_1}\omega_{k_2})^{1/2}
			\left[
			\frac{L_{k_1,k_2}}
			{|k_1|^{1/2}|k_2|^{1/2}|k|}
			-
			\frac{L_{k,-k_1}}
			{|k|^{1/2}|k_1|^{1/2}|k_2|}
			-
			\frac{L_{k,-k_2}}
			{|k|^{1/2}|k_2|^{1/2}|k_1|}
			\right],
		\end{equation*}
		
		\begin{equation*}
			L_{k_1,k_2}
			=
			k_1\cdot k_2
			+
			|k_1||k_2|.
		\end{equation*}

		For four-wave kinetic equations, surface gravity waves in fluids of finite depth provide a classical example. Hasselmann established the first wave kinetic equation for gravity waves in \cite{hasselmann1962non}, which has been widely used in modern wave forecasting. Zakharov also developed a statistical theory for gravity waves \cite{zakharov1999statistical}. Other four-wave models, such as those describing Langmuir waves \cite{zakharov1972collapse} and quantum fluids \cite{kolmakov2014wave}, also play important roles in WTT.
		
	\vspace{5pt}
	Back to capillary waves. The derivation of the capillary wave kinetic equation can be traced back to \cite{zakharov1965weak,zakharov1967weak,zakharov2012kolmogorov}. Pushkarev and Zakharov \cite{pushkarev2000turbulence} performed direct numerical simulations of capillary wave turbulence, confirming the predicted Kolmogorov spectrum \(I_k \propto k^{-19/4}\) for the surface elevation. Pan \cite{pan2017understanding} subsequently pointed out several errors in \cite{pushkarev2000turbulence} concerning the analytical evaluation of the Kolmogorov constant. In addition, \cite{pan2014direct} presented a direct numerical study of capillary waves. For further discussion and related topics, we refer the reader to \cite{nazarenko2011wave}.
	
	However, the theoretical analysis of the capillary wave kinetic equation remains rather limited, and many fundamental problems are still open due to its intricate structure. To the best of the author's knowledge, \cite{nguyen2018kinetic} is the only work that studies the capillary wave kinetic equation from a purely mathematical perspective. In that paper, the authors first developed many new techniques inspired by the progress made on the quantum Boltzmann equation \cite{escobedo2015convergence, alonso2016cauchy, nguyen2019uniform}.  Moreover, they applied those techniques to establish global existence and uniqueness in the radial setting.
		
		\vspace{7pt}
		Next, we briefly introduce the function spaces that will be used throughout this paper. These spaces are standard in WTT.
		
		The weighted Lebesgue space $L_s^p(\R^d)$, $s\ge 0$, is defined as 
		\begin{equation*}
			\left\{f:\R^d\to \R\;\bigg|\; \int_{\R^d}\langle k\rangle^{ps}\cdot |f(k)|^{p}dk<\infty\right\},
		\end{equation*}
		where $\langle k\rangle:=\left(1+|k|^2\right)^\frac{1}{2}$. And we equip $L_s^p(\R^d)$ with the norm
		\begin{equation*}
			\|f\|_{L_s^p(\R^d)}:=\left(\int_{\R^d}\langle k\rangle^{ps}\cdot |f(k)|^{p}dk\right)^{1/p}.
		\end{equation*}
		
		We also use the following notational conventions: $\|\cdot\|_p\equiv \|\cdot\|_{L^p(\R^d)}$, $\|\cdot\|_{p\to p}\equiv \|\cdot\|_{L^{p}(\R^d)\to L^{p}(\R^d)}$, $\|\cdot\|_{p,s}\equiv \|\cdot\|_{L_s^p(\R^d)}$ and $\|\cdot\|_{p,s\to p,s}\equiv \|\cdot\|_{L_s^p(\R^d)\to L_s^p(\R^d)}$.

\vspace{7pt}
The main result of this paper is the local existence of a strong solution to (\ref{Capillary}).

\begin{thm}\label{main}
	For $d\ge 2$, $n_0\in L_{4d+12}^{\infty}(\R^d)$, there exists $T=T\left(\|n_0\|_{\infty,4d+12}\right)>0$, such that the capillary wave kinetic equation (\ref{Capillary}) has a strong solution 
	\begin{equation*}
		n_k=n(t,k)\in C\left([0,T]; L_{2(d+1)}^{\infty}(\R^d)\right)\subseteq C\left([0,T]; L^{1}(\R^d)\right).
	\end{equation*}
	Moreover, we have the uniform bound
	\begin{equation*}
		\sup_{t\in[0,T]}\|n(t,k)\|_{\infty,4d+12}\le 2\|n_0\|_{\infty,4d+12}.
	\end{equation*}
\end{thm}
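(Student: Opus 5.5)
We follow the strategy of Pan--Wu \cite{pan2026local}. Write $S(n)=\mathcal{Q}(n,n)$, where $\mathcal{Q}$ is the symmetric bilinear form obtained by polarizing the cubic-in-structure (quadratic in $n$) collision integrand. Freezing one argument at a given nonnegative function $g$, we split the linear operator $f\mapsto \mathcal{Q}(g,f)$ as
\begin{equation*}
\mathcal{Q}(g,f)=-a[g](k)\,f_k+\mathcal{K}[g]f,
\end{equation*}
where $a[g]\ge 0$ collects all terms in which $n_k$ appears as a factor. These are the terms $-n_kn_{k_1}-n_kn_{k_2}$ in $R_{kk_1k_2}$, together with the corresponding terms in $R_{k_1kk_2}$ and $R_{k_2kk_1}$ that contain $n_k$ with a definite sign. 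The operator $\mathcal{K}[g]$ contains the remaining gain-type terms. The term $-a[g]f$ is dissipative: the solution of $\partial_t f=-a f+h$ is given by $e^{-\int a}$ times Duhamel, and this is contractive in every weighted $L^\infty$ space whenever $a\ge0$. The point of the splitting is therefore that the loss part needs no boundedness, while $\mathcal{K}[g]$ must be shown to be bounded on $L^\infty_{s}$.

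\textbf{Step 1 (kernel estimates).} On the resonant manifold $k=k_1+k_2$, $|k|^{3/2}=|k_1|^{3/2}+|k_2|^{3/2}$, we bound $|V_{kk_1k_2}|^2\lesssim |k||k_1||k_2|\cdot(\text{homogeneous factor})$, giving overall homogeneity of order $3$. After integrating out the momentum delta, the frequency delta restricts $k_1$ to a $(d-1)$-dimensional surface. Its measure is controlled by the co-area formula through $|\nabla_{k_1}(\omega_{k_1}+\omega_{k-k_1})|^{-1}$. We estimate this Jacobian, including the degenerate region where $k_1\parallel k$, and obtain bounds of the form
\begin{equation*}
\int |V|^2\delta\delta\, \langle k_1\rangle^{-s}\,dk_1\lesssim \langle k\rangle^{\gamma},
\end{equation*}
with $\gamma$ explicit, roughly $3+d$. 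This explains the weight $4d+12$.

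\textbf{Step 2 (bounds for the bounded part).} We show that $\|\mathcal{K}[g]f\|_{\infty,s}\le C\|g\|_{\infty,s'}\|f\|_{\infty,s'}$ for suitable $s'>s$, and likewise for $a[g]$ with a loss of weight. This is where the lack of symmetry appears. In the gravity-wave four-wave case the gain terms pair off by the symmetry $k\leftrightarrow k_1$. Here the three terms $R_{kk_1k_2}$, $R_{k_1kk_2}$, $R_{k_2kk_1}$ have different geometry: in the last two, $k$ is a \emph{low} mode, with $|k|\le |k_1|$, while $k_1$ is the output mode. So we must group terms so that every product contributing to $\mathcal{K}$ carries weight on the largest frequency. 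Concretely, in $R_{k_1kk_2}=\ldots(n_kn_{k_2}-n_{k_1}n_k-n_{k_1}n_{k_2})$ we keep $-n_{k_1}n_k$ in the loss part. The product $n_{k_1}n_{k_2}$ involves the large mode $k_1$, whose decay $\langle k_1\rangle^{-s}$ pays for the growth $\langle k\rangle^{\gamma}$ of the kernel. We also exploit the partial cancellation between $R_{k_1kk_2}$ and $R_{k_2kk_1}$ under $k_1\leftrightarrow k_2$. I expect this pairing, together with the Jacobian singularity in Step 1, to be the main obstacle.

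\textbf{Step 3 (iteration and limit).} We define iterates by
\begin{equation*}
\partial_t n^{(m+1)}=-a[n^{(m)}]\,n^{(m+1)}+\mathcal{K}[n^{(m)}]n^{(m)}, \qquad n^{(m+1)}(0)=n_0 .
\end{equation*}
This scheme preserves nonnegativity, provided $\mathcal{K}[g]g\ge0$ for $g\ge0$; if it does not, we move the offending terms into $a$. By Duhamel and Step 2, $\sup_t\|n^{(m)}\|_{\infty,4d+12}\le 2\|n_0\|_{\infty,4d+12}$ for $T$ small depending on $\|n_0\|_{\infty,4d+12}$. This is the claimed uniform bound.

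Next we estimate differences in the weaker norm $L^\infty_{2(d+1)}$. The trilinear bounds lose weight, and this loss is absorbed by the uniform high-weight bound. Hence the difference map is a contraction for small $T$, so $n^{(m)}$ converges in $C([0,T];L^\infty_{2(d+1)})$. We then pass to the limit in the Duhamel formula to obtain a strong solution. Finally, $L^\infty_{2(d+1)}\subseteq L^1$ because $\langle k\rangle^{-2(d+1)}$ is integrable on $\R^d$.
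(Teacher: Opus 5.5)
There is a genuine gap in Steps 2--3. You allow $\mathcal{K}[g]$ to lose weight ($s'>s$). Once it does, Duhamel cannot give $\sup_t\|n^{(m)}\|_{\infty,4d+12}\le 2\|n_0\|_{\infty,4d+12}$. In your scheme the gain is a source evaluated at the old iterate, so bounding $n^{(m+1)}$ in $L^\infty_{s}$ requires $n^{(m)}$ in $L^\infty_{s'}$, and the required weight grows with $m$.

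This loss is intrinsic, not an artifact of crude bounds. On the resonant manifold with $|k_1|\ll|k_2|\approx|k|$, Proposition~\ref{collision kernel} gives the two-sided bound $|V_{kk_1k_2}|^2\sim|k|\,|k_1|^{7/2}$, and the Jacobian of the resonance constraint is $\sim|k|^{1/2}$. Hence for $n=\langle k\rangle^{-s}$ the gain term $4\pi\iint|V_{kk_1k_2}|^2\delta\delta\,n_{k_1}n_{k_2}$ is bounded below by $c\,|k|^{1/2}n(k)$ as $|k|\to\infty$. The same happens for the gain $n_{k_1}n_{k_2}$ in the $R_{k_1kk_2}$ channel when $|k_2|\ll|k|\approx|k_1|$. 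So $\mathcal{K}[n]n\notin L^\infty_s$ in general, and the gain has exactly the size of the loss $a[n]n$.

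The factor $e^{-\int a}$ does not save the argument. First, $a$ has no lower bound. Second, even where $a\sim|k|^{1/2}$, you get $\int_0^t e^{-(t-\tau)a}\,d\tau\cdot|k|^{1/2}=O(1)$ rather than $O(T)$. So the map $n^{(m)}\mapsto n^{(m+1)}$ is not small at high frequencies.

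What is missing is the paper's choice to keep these high-frequency gain terms acting on the \emph{new} unknown and to pair them with the loss terms.
\begin{itemize}
\item The iteration is $\partial_t n_{\ell+1}=Q_{n_\ell}(t)n_{\ell+1}$. The dissipative part $Q_{g,D}$ contains all the loss terms together with the gain terms $2g_1h_2\chi_{2\ge1}$, $g_2h_1$, $g_1h_2$, in which the unknown sits at the high frequency.
\item After the changes of variables $k_1\leftrightarrow k_2$, $k\leftrightarrow k_1$, $k\leftrightarrow k_2$, Young's inequality gives $(h^{p-1},Q_{g,D}h)\le0$ for every $p$ (Lemma~\ref{dissipative lemma}).
\item Only the multipliers $g_1h$, $g_2h$ from the $R_{k_1kk_2}$, $R_{k_2kk_1}$ channels, where $g$ sits at the largest mode, are treated as bounded.
\item The weight is handled by the commutator $\mathcal{R}_g$. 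Its boundedness uses the mean-value gain $|k_1|/\langle k\rangle$ together with the sharp bound $|V_{kk_1k_2}|^2\lesssim|k|\min\{|k_1|,|k_2|\}^{7/2}$. Your Step 1 bound has homogeneity $3$, whereas $|V|^2$ has degree $9/2$, and it does not capture the first-order vanishing of the bracket that this requires.
\end{itemize}
With this structure the energy estimate $\|n_{\ell+1}(t)\|_{\infty,a}\le e^{C_a t\sup_{[0,T]}\|n_\ell\|_{\infty,2d+10}}\|n_0\|_{\infty,a}$ closes at any weight with no loss. Only the difference estimate loses weight, and, as in your Step 3, that loss is absorbed by the uniform $L^\infty_{4d+12}$ bound.

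Two smaller errors. The term $-n_{k_1}n_k$ of $R_{k_1kk_2}$ enters $S$ with a plus sign, so it is not a loss term. Also, $R_{k_1kk_2}$ and $R_{k_2kk_1}$ are exchanged by $k_1\leftrightarrow k_2$ and add rather than cancel; the cancellation that matters is between gain and loss.
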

\begin{rem}
Note that \(n_k\) represents the wave density in kinetic theory and is therefore non-negative. Moreover, the system \eqref{Capillary} preserves non-negativity throughout its evolution (see, e.g., \cite{nazarenko2011wave}). We will also provide an informal argument for this property in Remark~\ref{nonnegative}. Accordingly, throughout this paper, we restrict our attention to non-negative solutions, a condition that is essential for establishing dissipativity.
\end{rem}
\begin{rem}
   The main contribution of the present work is the removal of the isotropy assumption imposed in \cite{nguyen2018kinetic}, thereby establishing the local well-posedness of the capillary wave kinetic equation in the general, non-isotropic setting. We also emphasize that the weight exponents \(4d+12\) and \(2(d+1)\) in Theorem~\ref{main} are chosen sufficiently large to close the arguments and are not intended to be optimal.
\end{rem}

\vspace{7pt}
Next, we define the linearized operator $Q_g(t)$ acting on $h$ as follows:
\begin{equation*}
	Q_g(t)h:=\iint_{\R^{2d}} 4\pi
	|V_{k k_1 k_2}|^2
	\delta(k-k_1-k_2)
	\delta(\omega-\omega_{1}-\omega_{2}) \left[2g_1h_2 \chi_{2\ge 1}-g_1h-g_2h\right]dk_1dk_2
\end{equation*}

\begin{equation*}
	-\iint_{\R^{2d}} 4\pi
	|V_{k_1 k k_2}|^2
	\delta(k_1-k-k_2)
	\delta(\omega_{1}-\omega-\omega_{2}) \left[g_2h-g_1h-g_2h_1\right]dk_1dk_2
\end{equation*}

\begin{equation}\label{linear}
	-\iint_{\R^{2d}} 4\pi
	|V_{k_2 k k_1}|^2
	\delta(k_2-k-k_1)
	\delta(\omega_{2}-\omega-\omega_{1}) \left[g_1h-g_2h-g_1h_2\right]dk_1dk_2.
\end{equation}
Here, we follow the shorthand $\chi_{2\ge 1}:=\chi_{|k_2|\ge |k_1|}$ and 
\begin{equation*}
	\omega=\omega_k=|k|^{3/2}, \quad \omega_1=\omega_{k_1}=|k_1|^{3/2}, \quad \omega_2=\omega_{k_2}=|k_2|^{3/2},
\end{equation*}
\begin{equation*}
	g=g(k),\; g_i=g(k_i), \quad h=h(k),\; h_i=h(k_i), \quad \forall i=1,2.
\end{equation*}
One can also check that $Q_{n_k}(t)n_k=S(n_k)$, so $Q_g(t)$ is truly a linearization for $S$.

\vspace{7pt}
Then we can decompose the linearized operator $Q_g(t)$ into a dissipative part and a bounded part:

\begin{equation*}
	Q_{g,D}(t)h:=\iint_{\R^{2d}} 4\pi
	|V_{k k_1 k_2}|^2
	\delta(k-k_1-k_2)
	\delta(\omega-\omega_{1}-\omega_{2}) \left[2g_1h_2 \chi_{2\ge 1}-g_1h-g_2h\right]dk_1dk_2
\end{equation*}
\begin{equation*}
	-\iint_{\R^{2d}} 4\pi
	|V_{k_1 k k_2}|^2
	\delta(k_1-k-k_2)
	\delta(\omega_{1}-\omega-\omega_{2}) \left[g_2h-g_2h_1\right]dk_1dk_2
\end{equation*}

\begin{equation}\label{dissi}
	-\iint_{\R^{2d}} 4\pi
	|V_{k_2 k k_1}|^2
	\delta(k_2-k-k_1)
	\delta(\omega_{2}-\omega-\omega_{1}) \left[g_1h-g_1h_2\right]dk_1dk_2.
\end{equation}
\vspace{7pt}
\begin{equation*}
	Q_{g,b}(t)h:=\iint_{\R^{2d}} 4\pi
	|V_{k_1 k k_2}|^2
	\delta(k_1-k-k_2)
	\delta(\omega_{1}-\omega-\omega_{2}) g_1h \; dk_1dk_2
\end{equation*}
\begin{equation}\label{bounded}
	+\iint_{\R^{2d}} 4\pi
	|V_{k_2 k k_1}|^2
	\delta(k_2-k-k_1)
	\delta(\omega_{2}-\omega-\omega_{1}) g_2h \; dk_1dk_2.
\end{equation}
Note that $V_{kk_1k_2}=V_{kk_2k_1}$; one can directly verify that 
\begin{equation*}
	Q_g(t)=Q_{g,D}(t)+Q_{g,b}(t).
\end{equation*}

\vspace{10pt}
To establish the local existence, we utilize the following iteration scheme (conventionally, we set $n_1(t,k)\equiv n_0(k)$):

\begin{equation}\label{iter}
	\begin{cases}
		\dfrac{\partial n_{\ell+1}}{\partial t}=Q_{\ell}(t)n_{\ell+1}, \quad Q_{\ell}(t):=Q_{n_{\ell}}(t), \\[4pt]
		n_{\ell+1}(0,k)=n_0(k), \qquad (t,k)\in \R^{+} \times \R^d.
	\end{cases}
\end{equation}
\begin{rem}\label{nonnegative}
	This iterative scheme also preserves non-negativity due to the intrinsic kinetic structure of the linearized operator. This property can be understood from an ODE perspective.
	
	Indeed, the linearized operator \(Q_g\) can be decomposed into the sum of a positive integral operator and a multiplication operator:
	\begin{equation*}
		(Q_g(t)h)(k):=\iint_{\R^{2d}}K_1(k,k_1,k_2)h(k_1)+K_2(k,k_1,k_2)h(k_2)dk_1dk_2+\nu(k)h(k)
	\end{equation*}
	\begin{equation*}
		=:(\mathcal{I}h)(k)+\nu(k)h(k),
	\end{equation*}
	where the kernels $K_1(k,k_1,k_2)$ and $K_2(k,k_1,k_2)$ are given by
	\begin{equation*}
		K_1(k,k_1,k_2):=4\pi
		|V_{k_1 k k_2}|^2
		\delta(k_1-k-k_2)
		\delta(\omega_1-\omega-\omega_{2})g_2,
	\end{equation*}
	\begin{equation*}
		K_2(k,k_1,k_2):=8\pi
		|V_{k k_1 k_2}|^2
		\delta(k-k_1-k_2)
		\delta(\omega-\omega_{1}-\omega_{2})g_1+4\pi
		|V_{k_2 k k_1}|^2
		\delta(k_2-k-k_1)
		\delta(\omega_2-\omega-\omega_{1})g_1.
	\end{equation*}
	Note that when $g$ is non-negative, the kernels are also non-negative. 
	
	Without loss of generality, one can further assume that $g$ and $n_0$ are strictly positive, since we can apply continuity and perturbation.
	
	let \(t_0>0\) be the first time at which \(h\) attains the value \(0\). Then there exists \(k_0\in\mathbb{R}^d\) such that
	\[
	h(t_0,k_0)=0,
	\qquad
	h(t_0,k)\ge0,\quad \forall\,k\in\mathbb{R}^d.
	\]
	Since the kernels \(K_1\) and \(K_2\) are non-negative and $g:=n_{\ell}$ is strictly positive (by induction),
	\[
	(\mathcal{I}h)(t_0,k_0)>0,
	\]
	while
	\[
	(\nu h)(t_0,k_0)=\nu(k_0)h(t_0,k_0)=0.
	\]
	Hence,
	\[
	\partial_t h(t_0,k_0)=(Q_\ell(t_0)h)(k_0)
	=(\mathcal{I}h)(t_0,k_0)>0,
	\]
	contradicting the definition of \(t_0\). Therefore,
	\[
	h(t,k)\ge0,\quad \forall t\ge0.
	\]
\end{rem}

\vspace{7pt}
For any weight exponent \(a\ge0\), the weighted function \(\langle k\rangle^{a}n_{\ell+1}\) satisfies the following evolution equation:

\begin{equation}\label{weighted equation}
	\begin{cases}
		\dfrac{\partial }{\partial t}(\langle k\rangle^{a} n_{\ell+1})=[Q_{\ell}(t)+\mathcal{R}_{\ell}(t)](\langle k\rangle^{a} n_{\ell+1}),  \\[4pt]
		\langle k\rangle^{a} n_{\ell+1}(0,k)=\langle k\rangle^{a}n_0(k), \qquad (t,k)\in \R^{+} \times \R^d,
	\end{cases}
\end{equation}
where the extra operator $\mathcal{R}_{\ell}(t)=\mathcal{R}_{n_\ell}(t)$ is given by
\begin{equation*}
	\mathcal{R}_{g}(t) \varphi:= \iint_{\R^{2d}} 8\pi
	|V_{k k_1 k_2}|^2
	\delta(k-k_1-k_2)
	\delta(\omega-\omega_{1}-\omega_{2}) g_1 \dfrac{\langle k\rangle^{a}-\langle k_2\rangle^{a}}{\langle k_2\rangle^{a}} \varphi_2 \chi_{2\ge 1} \; dk_1dk_2
\end{equation*}
\begin{equation*}
	+\iint_{\R^{2d}} 4\pi
	|V_{k_1 k k_2}|^2
	\delta(k_1-k-k_2)
	\delta(\omega_1-\omega-\omega_{2}) g_2 \dfrac{\langle k\rangle^{a}-\langle k_1\rangle^{a}}{\langle k_1\rangle^{a}} \varphi_1 \; dk_1dk_2
\end{equation*}
\begin{equation}\label{extra}
	+\iint_{\R^{2d}} 4\pi
	|V_{k_2 k k_1}|^2
	\delta(k_2-k-k_1)
	\delta(\omega_2-\omega-\omega_{1}) g_1 \dfrac{\langle k\rangle^{a}-\langle k_2\rangle^{a}}{\langle k_2\rangle^{a}} \varphi_2 \; dk_1dk_2.
\end{equation}

\vspace{10pt}
\noindent
\textbf{Organization.}

This paper is organized as follows. In Section~2, we establish a sharp estimate for the collision kernel \(V_{kk_1k_2}\), which plays a central role in the subsequent sections. In Section~3, we prove the dissipativity of \(Q_{g,D}\) and the boundedness of \(Q_{g,b}\) and \(\mathcal{R}_g\). In Section~4, we combine the results established in Section~3 to derive a key energy estimate. In Section~5, we present the proof of Theorem~\ref{main} in detail. Finally, Appendix~A is devoted to the proofs of several auxiliary estimates used throughout the paper, while Appendix~B rigorously justifies the construction of the propagator generated by the dissipative operator.

\vspace{10pt}
\noindent
\textbf{Notation.}
\begin{itemize}
	\item By $u\in C([0,T]; B) \; ( \mbox{or} \; L^{p}([0,T];B))$ for a Banach space $B,$ we mean $u$ is a continuous $(\mbox{or} \; L^{p})$ map from $[0,T]$ to $B;$ see \cite[page 301]{evans2022partial}.
	\item By $A\lesssim B$ (resp. $A\sim B$), we mean there is a positive constant $C$, such that $A\le CB$ (resp. $C^{-1}B\le A \le C B$). If the constant $C$ depends on $p,$ then we write $A\lesssim_{p}B$ (resp. $A\sim_{p} B$).
	\item By $A\ll B$, we mean $\frac{A}{B}$ is sufficiently smaller than $1$.
	\item By $\mathcal{S}(\R^d)$, we mean the Schwarz space on $\R^d$.
\end{itemize}

\vspace{10pt}
\section{Sharp estimate for the collision kernel}
To control the growth of the strong solution, we need an estimate for the collision kernel $|V_{kk_1k_2}|^2$, where  
\begin{equation*}
	V_{k k_1 k_2}
	=
	\frac{1}{8\pi\sqrt{2}}
	(\omega_k\omega_{k_1}\omega_{k_2})^{1/2}
	\left[
	\frac{L_{k_1,k_2}}
	{|k_1|^{1/2}|k_2|^{1/2}|k|}
	-
	\frac{L_{k,-k_1}}
	{|k|^{1/2}|k_1|^{1/2}|k_2|}
	-
	\frac{L_{k,-k_2}}
	{|k|^{1/2}|k_2|^{1/2}|k_1|}
	\right].
\end{equation*}

\vspace{7pt}
By using Taylor expansion, we can obtain the following sharp estimate:
\begin{prop}\label{collision kernel}
	Suppose
	\[
	k=k_1+k_2,\qquad
	|k|^{3/2}=|k_1|^{3/2}+|k_2|^{3/2},
	\]
	
	Then we have
	\[
	\begin{aligned}
		&
		\Bigg|\frac{L_{k_1,k_2}}
		{(|k_1||k_2|)^{1/2}|k|}
		-
		\frac{L_{k,-k_1}}
		{(|k||k_1|)^{1/2}|k_2|}
		-
		\frac{L_{k,-k_2}}
		{(|k||k_2|)^{1/2}|k_1|}\Bigg|
		\lesssim 
		\frac{\min\{|k_1|,|k_2|\}}
		{\max\{|k_1|,|k_2|\}}.
	\end{aligned}
	\]
\end{prop}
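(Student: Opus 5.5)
The plan is to reduce to a one-parameter problem by homogeneity and symmetry. Then I would expand every term in the small ratio $r=\min\{|k_1|,|k_2|\}/|k|$, using only the resonance relations to express all inner products through lengths.

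\textbf{Reductions.} Each of the three fractions is homogeneous of degree $0$ in $(k,k_1,k_2)$: the numerators $L$ have degree $2$ and the denominators have degree $2$. The constraint set is invariant under the scaling $k\mapsto\lambda k$, $k_i\mapsto\lambda k_i$. So we may assume $|k|=1$. The left-hand side is symmetric under $k_1\leftrightarrow k_2$, since the first term is symmetric and the last two are exchanged. Hence we may assume $r:=|k_1|\le|k_2|=:\rho$.

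The resonance condition gives $\rho^{3/2}=1-r^{3/2}$. It follows that $\rho\in[2^{-2/3},1]$ and $r\le 2^{-2/3}$, so the target bound is $\lesssim r$.

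\textbf{Exact formulas for the inner products.} From $k_2=k-k_1$ and $k_1=k-k_2$ we get
\[
k\cdot k_1=\tfrac12(1+r^2-\rho^2),\qquad k\cdot k_2=\tfrac12(1+\rho^2-r^2),\qquad k_1\cdot k_2=\tfrac12(1-r^2-\rho^2).
\]
Write $\rho=1-\eta$. Then $\rho^{3/2}=1-r^{3/2}$ gives $\eta=\tfrac23 r^{3/2}+O(r^3)$, and $\rho^2=1-\tfrac43 r^{3/2}+O(r^3)$. Substituting yields:
\begin{itemize}
\item $L_{k_1,k_2}=r\rho+k_1\cdot k_2=r+\tfrac23 r^{3/2}+O(r^{2})$;
\item $L_{k,-k_1}=r-k\cdot k_1=r-\tfrac23 r^{3/2}+O(r^2)$;
\item $L_{k,-k_2}=\rho-k\cdot k_2=\tfrac12\big(r^2-(1-\rho)^2\big)=\tfrac12 r^2+O(r^3)$.
\end{itemize}

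\textbf{Large $r$.} If $r\ge\varepsilon_0$ for a fixed small $\varepsilon_0$, all denominators are bounded below, since $r\ge\varepsilon_0$ and $\rho\ge 2^{-2/3}$. Each term is then $O(1)\lesssim r$ trivially. So only $r\le\varepsilon_0$ matters, where the expansions above hold with uniform remainders.

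\textbf{Small $r$: the three terms.}
\begin{itemize}
\item The third term is $L_{k,-k_2}/(\rho^{1/2}r)=\tfrac12 r+O(r^2)$, which is already $O(r)$.
\item The first term is $L_{k_1,k_2}/(r\rho)^{1/2}=r^{1/2}+O(r)$, using $\rho^{-1/2}=1+O(r^{3/2})$.
\item The second term is $L_{k,-k_1}/(r^{1/2}\rho)=r^{1/2}+O(r)$.
\end{itemize}
Individually the first two terms are only $O(r^{1/2})$. The heart of the proof is that their leading parts $r^{1/2}$ cancel exactly in the difference, leaving $O(r)$. Together with the third term, this gives the claimed bound $\lesssim r=\min\{|k_1|,|k_2|\}/\max\{|k_1|,|k_2|\}$, since $\max\{|k_1|,|k_2|\}=\rho\sim1$.

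\textbf{Main obstacle.} The delicate step is exhibiting this $r^{1/2}$ cancellation with rigorous, uniform remainder control. I would write the difference of the first two terms exactly as
\[
r^{-1/2}\Big(L_{k_1,k_2}\rho^{-1/2}-L_{k,-k_1}\rho^{-1}\Big).
\]
Using the exact identities above, $L_{k_1,k_2}-L_{k,-k_1}=r\rho+r^2+\rho^2-1-r$, which is $O(r^{3/2})$. The corrections from $\rho^{-1/2}$ and $\rho^{-1}$ contribute $O(r\cdot r^{3/2})$. So the bracket is $O(r^{3/2})$, and dividing by $r^{1/2}$ gives $O(r)$. This avoids Taylor remainders in the angle and uses only the algebraic relation between $\rho$ and $r$.
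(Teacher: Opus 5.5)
Your proof is correct and follows essentially the paper's route. Both reduce by scaling and the $k_1\leftrightarrow k_2$ symmetry to a single small parameter $r$ and show that the $r^{1/2}$ parts of the first two terms cancel. The paper normalizes $|k_2|=1$ and Taylor-expands all three terms to reach $B=\frac56 r+\cdots$. You normalize $|k|=1$ and isolate the cancellation through exact identities, which gives cleaner, uniform remainder control.

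There is one algebra slip. The exact identity is $L_{k_1,k_2}-L_{k,-k_1}=1-\rho^2+r\rho-r=(1-\rho)(1+\rho-r)$, not $r\rho+r^2+\rho^2-1-r$. Your version has the wrong sign on $1-\rho^2$ and contradicts your own expansions, which give $+\frac43 r^{3/2}$. Nothing breaks, because $0\le 1-\rho\le 1-\rho^{3/2}=r^{3/2}$ and $1\le 1+\rho-r\le 2$, so the difference is still $O(r^{3/2})$ and the argument goes through unchanged.
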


\begin{proof}
	If $|k_1|,|k_2|\sim |k|$, then the above estimate holds trivially. Without loss of generality, we can assume
	\[
	|k_1|\ll |k_2|.
	\]
	Let
	\[
	m:=|k_1|,\qquad
	M:=|k_2|,\qquad
	r:=\frac{m}{M},
	\]
	so that \(r\ll1\). By the resonance relation,
	\[
	|k|
	=
	M(1+r^{3/2})^{2/3}.
	\]
	
	Using
	\[
	k_1\cdot k_2
	=
	\frac{|k|^2-m^2-M^2}{2},
	\]
	the left-hand side can be written as
	\[
	B(r)=T_1(r)-T_2(r)-T_3(r),
	\]
	where
	\[
	\begin{aligned}
		T_1&=
		\frac{r+\frac12(s^2-r^2-1)}
		{r^{1/2}s},
		\\
		T_2&=
		\frac{sr-\frac12(r^2+s^2-1)}
		{(rs)^{1/2}},
		\\
		T_3&=
		\frac{s-\frac12(1+s^2-r^2)}
		{s^{1/2}r},
	\end{aligned}
	\qquad
	s=(1+r^{3/2})^{2/3}.
	\]
	
	Expanding
	\[
	s
	=
	1+\frac23r^{3/2}
	-\frac19r^3
	+O(r^{9/2}),
	\]
	one obtains
	\[
	T_1
	=
	r^{1/2}
	+\frac23r
	-\frac12r^{3/2}
	+O(r^{5/2}),
	\]
	\[
	T_2
	=
	r^{1/2}
	-\frac23r
	-\frac12r^{3/2}
	+O(r^{5/2}),
	\]
	and
	\[
	T_3
	=
	\frac12r
	+O(r^{5/2}).
	\]
	Therefore,
	\[
	B(r)
	=
	\frac56r
	+O(r^{5/2})
	\sim r
	=
	\frac{|k_1|}{|k_2|},
	\]
	which completes the proof.
\end{proof}

From Proposition \ref{collision kernel}, we can directly derive 
\begin{equation}\label{R}
	|V_{kk_1k_2}|^2\lesssim |k|\min\lbrace |k_1|, |k_2|\rbrace^{7/2}.
\end{equation}
Similarly, the following estimates also hold
\begin{equation}\label{R_1}
	|V_{k_1kk_2}|^2\lesssim |k_1|\min\lbrace |k|, |k_2|\rbrace^{7/2},
\end{equation}
\begin{equation}\label{R_2}
	|V_{k_2kk_1}|^2\lesssim |k_2|\min\lbrace |k|, |k_1|\rbrace^{7/2}.
\end{equation}

\vspace{10pt}	
\section{Proof of main lemmas}
In this section, we will show $Q_{g,D}$ (\ref{dissi}) is dissipative, $Q_{g,b}$ (\ref{bounded}) and $\mathcal{R}_g$ (\ref{extra}) are bounded in some spaces.

First recall the definition of dissipative operator \cite[page~13-14]{pazy2012semigroups}:
\begin{defi}\label{def}
	Let \(X\) be a Banach space and \(X^{\ast}\) its dual space. For every \(x\in X\), we define the duality set \(F(x)\subseteq X^{\ast}\) by
	\begin{equation*}
		F(x):=\left\{x^{\ast}\in X^{\ast}\big|\; \langle x^{\ast}, x \rangle_{X^{\ast},X}=\|x\|^2=\|x^{\ast}\|^2\right\}.
	\end{equation*}
	A linear operator \(A\) is called dissipative if, for every \(x\in D(A)\), there exists \(x^{\ast}\in F(x)\) such that
	\begin{equation*}
		\Re e\langle x^{\ast}, Ax\rangle_{X^{\ast},X}\le 0.
	\end{equation*}
	Equivalently, \(A\) is dissipative if and only if
	\begin{equation*}
		\|(\lambda I-A)x\|\ge \lambda\|x\|, \quad \forall x\in D(A), \; \lambda>0.
	\end{equation*}
\end{defi}

\vspace{4pt}
\begin{lemma}\label{dissipative lemma}
	$Q_{g,D}$ is dissipative for all $1\le p\le \infty$.
\end{lemma}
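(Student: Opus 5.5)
Our plan is to verify the $L^p$ dissipativity criterion directly: for $1\le p<\infty$ we pair $Q_{g,D}h$ against the duality element $x^\ast=|h|^{p-2}h\,\|h\|_p^{2-p}$ and show
\begin{equation*}
\int_{\R^d}(Q_{g,D}h)(k)\,|h(k)|^{p-2}h(k)\,dk\le 0 ,
\end{equation*}
assuming throughout that $g\ge0$ (as stressed in the remarks). The case $p=\infty$ then follows by letting $p\to\infty$ in the equivalent characterization $\|(\lambda I-Q_{g,D})h\|_p\ge\lambda\|h\|_p$, or by a direct sup argument at a near-maximum point, as in the ODE argument of Remark~\ref{nonnegative}. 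Each term of $Q_{g,D}h$ has the form gain $g_\ast h_\ast$ minus loss $g_\ast h$. We therefore use Young's inequality
\begin{equation*}
h_\ast|h|^{p-2}h\le \tfrac1p|h_\ast|^p+\tfrac{p-1}{p}|h|^p
\end{equation*}
on every gain term, and then show that, after changes of variables, the $|h_\ast|^p$ contributions are cancelled by the $|h|^p$ loss contributions.

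The pairing is as follows. In the first integral (kernel $|V_{kk_1k_2}|^2\delta(k-k_1-k_2)\delta(\omega-\omega_1-\omega_2)$), the gain $2g_1h_2\chi_{2\ge1}$ is bounded by $\tfrac{2}{p}g_1|h_2|^p\chi_{2\ge1}+\tfrac{2(p-1)}{p}g_1|h|^p\chi_{2\ge1}$. Using the $k_1\leftrightarrow k_2$ symmetry of the kernel, the losses $-g_1|h|^p-g_2|h|^p$ equal $-2g_1|h|^p\chi_{2\ge1}-2g_1|h|^p\chi_{1\ge2}$. The $|h|^p$ part of the gain therefore leaves a remainder
\begin{equation*}
-\tfrac{2}{p}g_1|h|^p\chi_{2\ge1}-2g_1|h|^p\chi_{1\ge2}.
\end{equation*}
Next consider the second integral. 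Its term $-g_2h+g_2h_1$ gives, after Young, $\tfrac1p g_2(|h_1|^p-|h|^p)$ with kernel $|V_{k_1kk_2}|^2\delta(k_1-k-k_2)\delta(\omega_1-\omega-\omega_2)$. Relabel $k\leftrightarrow k_1$ in the $|h_1|^p$ part. This produces $\tfrac1p g_2|h|^p$ integrated against $|V_{kk_1k_2}|^2\delta(k-k_1-k_2)\delta(\omega-\omega_1-\omega_2)$, that is, the first kernel with weight $g_2$ and no indicator. The third integral symmetrically yields $\tfrac1p g_1(|h_2|^p-|h|^p)$. Its positive part relabels to $\tfrac1p g_1|h|^p$ on the first kernel.

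Summing, the leftover positive terms on the first kernel are $\tfrac1p(g_1+g_2)|h|^p$. The negative remainders are $-\tfrac2p g_1|h|^p\chi_{2\ge1}-2g_1|h|^p\chi_{1\ge2}$. By $k_1\leftrightarrow k_2$ symmetry the positive terms equal $\tfrac2p g_1|h|^p(\chi_{2\ge1}+\chi_{1\ge2})$. The net is
\begin{equation*}
-(2-\tfrac2p)g_1|h|^p\chi_{1\ge2}\le0 ,
\end{equation*}
which establishes dissipativity. There is a further contribution: the $\tfrac2p g_1|h_2|^p\chi_{2\ge1}$ term from the first gain, relabelled via $k\leftrightarrow k_2$, becomes a term on the third kernel with $|h|^p$ and indicator $\chi_{|k|\ge|k_1|}$. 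It must be absorbed by the negative $-\tfrac1p g_1|h|^p$ loss there. The main obstacle is exactly this bookkeeping, because the kernel lacks the full symmetry of the gravity-wave case and the paper's indicator $\chi_{2\ge1}$ is designed to make the counts match. We expect to redo the tally carefully, distributing the $\tfrac1p$ and $\tfrac{p-1}{p}$ weights differently if needed, and possibly using Young with unequal weights, so that every $|h_\ast|^p$ term lands on a matching loss with a coefficient at most that of the loss. Convergence of all integrals for $h$ in a suitable dense domain (e.g.\ $\mathcal S(\R^d)$ with decay) is justified by the kernel bounds \eqref{R}--\eqref{R_2}.
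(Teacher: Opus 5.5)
You follow the paper's strategy: Young's inequality on each gain term, then the relabellings $k\leftrightarrow k_1$, $k\leftrightarrow k_2$, $k_1\leftrightarrow k_2$ so that losses cancel the $|h_\ast|^p$ terms. Your tally of the $|h|^p$ terms on the first kernel, with net $-(2-\tfrac2p)g_1|h|^p\chi_{1\ge2}$, is correct. But the argument is not closed, and the final absorption as you set it up fails.

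After $k\leftrightarrow k_2$, the term $\tfrac2p g_1|h_2|^p\chi_{2\ge1}$ becomes $\tfrac2p g_1|h|^p\chi_{|k|\ge|k_1|}$ on the third kernel $|V_{k_2kk_1}|^2\delta(k_2-k-k_1)\delta(\omega_2-\omega-\omega_1)$. The only loss you assign to it is $-\tfrac1p g_1|h|^p$. On $\{|k|\ge|k_1|\}$ this leaves a net $+\tfrac1p g_1|h|^p$. Your first-kernel remainder cannot compensate this, since for $p=1$ that remainder is identically zero. Nor is the remedy a reweighting of Young's inequality: for $p=1$ there is nothing to reweight.

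The missing piece is a loss you produced and then dropped: the $-\tfrac1p g_2|h|^p$ on the second kernel. The swap $k_1\leftrightarrow k_2$ maps the second kernel $|V_{k_1kk_2}|^2\delta(k_1-k-k_2)\delta(\omega_1-\omega-\omega_2)$ onto the third and $g_2$ onto $g_1$. So this is a second copy of $-\tfrac1p g_1|h|^p$ on the third kernel. The total loss there is $-\tfrac2p g_1|h|^p$, and the net is $-\tfrac2p g_1|h|^p\chi_{|k|<|k_1|}\le 0$. Together with your first-kernel remainder, this proves the lemma.

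The paper avoids this bookkeeping by moving everything to the first kernel before using Young. For $h\ge0$ the integrand becomes
\[
g_1\bigl(h_2h^{p-1}\chi_{2\ge1}+hh_2^{p-1}-h^p-h_2^p\bigr)+g_2\bigl(h_1h^{p-1}\chi_{1\ge2}+hh_1^{p-1}-h^p-h_1^p\bigr),
\]
and each bracket is $\le0$ because $a^p+b^p-ab^{p-1}-a^{p-1}b=(a-b)(a^{p-1}-b^{p-1})\ge0$.

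This also corrects your remark about the indicator: $\chi_{2\ge1}$ is not what makes the counts match. Without it the coefficients still balance exactly; the indicator only creates slack. It is needed for the commutator $\mathcal{R}_g$, not for dissipativity.
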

\begin{proof}
	By definition and density, it suffices to prove
\begin{equation}\label{d}
	\left(h^{p-1}, Q_{g,D}(t)h\right)_{L^2}\le 0, \quad \forall h\in \mathcal{S}(\R^d),\; h\ge 0.
\end{equation}
Plugging (\ref{dissi}) into (\ref{d}), we can obtain
\begin{equation}\label{1}
	\iiint_{\R^{3d}} 4\pi
	|V_{k k_1 k_2}|^2
	\delta(k-k_1-k_2)
	\delta(\omega-\omega_{1}-\omega_{2}) \left[2g_1h_2 \chi_{2\ge 1}-g_1h-g_2h\right]h^{p-1}\; dkdk_1dk_2,
\end{equation}
\begin{equation}\label{2}
	-\iiint_{\R^{3d}} 4\pi
	|V_{k_1 k k_2}|^2
	\delta(k_1-k-k_2)
	\delta(\omega_{1}-\omega-\omega_{2}) \left[g_2h-g_2h_1\right]h^{p-1}\; dkdk_1dk_2,
\end{equation}

\begin{equation}\label{3}
	-\iiint_{\R^{3d}} 4\pi
	|V_{k_2 k k_1}|^2
	\delta(k_2-k-k_1)
	\delta(\omega_{2}-\omega-\omega_{1}) \left[g_1h-g_1h_2\right]h^{p-1}\; dkdk_1dk_2.
\end{equation}
Note that $V_{kk_1k_2}=V_{kk_2k_1}$; we can exchange the variables
\begin{equation*}
	k_1\leftrightarrow k_2,
\end{equation*}
then (\ref{1}) can be rewritten as 
\begin{equation*}
	(\ref{1})=\iiint_{\R^{3d}} 4\pi
	|V_{k k_1 k_2}|^2
	\delta(k-k_1-k_2)
	\delta(\omega-\omega_{1}-\omega_{2}) 
\end{equation*}
\begin{equation*}
	\times \left[g_1h_2\chi_{2\ge1}+g_2h_1\chi_{1\ge2}-g_1h-g_2h\right]h^{p-1} \; dkdk_1dk_2.
\end{equation*}

Similarly, one can swap the variables $k_1\leftrightarrow k$, $k_2\leftrightarrow k$ in (\ref{2}) and (\ref{3}), respectively. 
\begin{equation*}
	(\ref{2})=-\iiint_{\R^{3d}} 4\pi
	|V_{k k_1 k_2}|^2
	\delta(k-k_1-k_2)
	\delta(\omega-\omega_1-\omega_{2}) \left[g_2h_1-g_2h\right]h_1^{p-1}\; dkdk_1dk_2,
\end{equation*}
\begin{equation*}
	(\ref{3})=-\iiint_{\R^{3d}} 4\pi
	|V_{k k_1 k_2}|^2
	\delta(k-k_1-k_2)
	\delta(\omega-\omega_1-\omega_{2}) \left[g_1h_2-g_1h\right]h_2^{p-1}\; dkdk_1dk_2.
\end{equation*}
	
	Now, rearranging the above terms, we derive
\begin{equation*}
	\left(h^{p-1}, Q_{g,D}(t)h\right)_{L^2}=\iiint_{\R^{3d}} 4\pi
	|V_{k k_1 k_2}|^2
	\delta(k-k_1-k_2)
	\delta(\omega-\omega_{1}-\omega_{2}) \Gamma(k,k_1,k_2) dkdk_1dk_2,
\end{equation*}
where $\Gamma(k,k_1,k_2)$ is given by 
\begin{equation*}
	\Gamma(k,k_1,k_2):=\left(g_1h_2 h^{p-1}\chi_{2\ge 1}-\frac{1}{p}g_1 h_2^{p}-\frac{p-1}{p}g_1 h^p\right)
\end{equation*}
\begin{equation*}
	+\left(g_2h_1 h^{p-1}\chi_{1\ge 2}-\frac{1}{p}g_2 h_1^{p}-\frac{p-1}{p}g_2 h^p\right)
\end{equation*}
\begin{equation*}
	+\left(g_2hh_1^{p-1}-\frac{1}{p}g_2 h^p-\frac{p-1}{p}g_2 h_1^{p}\right)
\end{equation*}
\begin{equation*}
	+\left(g_1hh_2^{p-1}-\frac{1}{p}g_1h^{p}-\frac{p-1}{p}g_1h_2^p\right).
\end{equation*}
By Young's inequality, every term in the bracket is non-positive, which ensures
\begin{equation*}
	\left(h^{p-1}, Q_{g,D}(t)h\right)_{L^2}\le0.
\end{equation*}
\end{proof}

To establish the boundedness, we first prove the following estimate for the collision integral. The proof is similar to that of Lemma~3.14 in \cite{pan2026local}.

\begin{prop}\label{3.14}
	Define the multidimensional collision integral as follows:
	\begin{equation}\label{3.141}
		G_{F}^1(k):=\iint_{\R^{2d}}
		\delta(k-k_1-k_2)
		\delta(\omega-\omega_{1}-\omega_{2}) F(k,k_1,k_2) \; dk_1dk_2,
	\end{equation}
	\begin{equation}\label{3.142}
		G_{F}^2(k):=\iint_{\R^{2d}}
		\delta(k_1-k-k_2)
		\delta(\omega_1-\omega-\omega_{2}) F(k,k_1,k_2) \; dk_1dk_2,
	\end{equation}
	\begin{equation}\label{3.143}
		G_{F}^3(k):=\iint_{\R^{2d}}
		\delta(k_2-k-k_1)
		\delta(\omega_2-\omega-\omega_{1}) F(k,k_1,k_2) \; dk_1dk_2.
	\end{equation}
	Then the following estimates hold
	\begin{equation}\label{3.1411}
		|G_F^1(k)|\lesssim \|\min\lbrace \langle k_1\rangle^{d+1}, \langle k_2\rangle^{d+1}\rbrace F(k,k_1,k_2)\rbrace\|_{L_{k_1,k_2}^{\infty}},
	\end{equation}
	\begin{equation}\label{3.1421}
		|G_{F\chi_{0\ge 2}}^2(k)|\lesssim \|\langle k_2\rangle^{d+1} F(k,k_1,k_2)\rbrace\|_{L_{k_1,k_2}^{\infty}},
	\end{equation}
	\begin{equation}\label{3.1431}
		|G_{F\chi_{0\ge 1}}^3(k)|\lesssim \|\langle k_1\rangle^{d+1} F(k,k_1,k_2)\rbrace\|_{L_{k_1,k_2}^{\infty}}.
	\end{equation}
\end{prop}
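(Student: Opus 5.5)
The approach is to integrate out one delta function and reduce each integral to an integral over the resonant manifold. I then bound $|F|$ pointwise by the $L^\infty$ norm times the inverse weight, and check that the remaining surface integral, weighted by $\langle\cdot\rangle^{-(d+1)}$, is finite uniformly in $k$. I take $d\ge2$ and $F\ge0$ without loss of generality.

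\textbf{Estimate \eqref{3.1411}.} First eliminate $k_2=k-k_1$, so that
\[
G_F^1(k)=\int_{\R^d}\delta\big(\omega_k-\omega_{k_1}-\omega_{k-k_1}\big)F(k,k_1,k-k_1)\,dk_1 .
\]
Split $\R^d$ into $\{|k_1|\le|k-k_1|\}$ and its complement; by the symmetry $k_1\leftrightarrow k-k_1$ it suffices to treat the first region. There
\[
F\le \langle k_1\rangle^{-(d+1)}\,\big\|\min\{\langle k_1\rangle^{d+1},\langle k_2\rangle^{d+1}\}F\big\|_{L^\infty},
\]
so it remains to show $\int \delta(\phi(k_1))\langle k_1\rangle^{-(d+1)}\,dk_1\lesssim1$ uniformly in $k$, where $\phi(k_1)=\omega_k-\omega_{k_1}-\omega_{k-k_1}$. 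Write $k_1$ in polar coordinates $k_1=\rho\theta$. For fixed direction $\theta$, the map $\rho\mapsto |k_1|^{3/2}+|k-k_1|^{3/2}$ is convex in $\rho$ and equals $\omega_k$ at $\rho=0$, so the resonance has at most one other root $\rho_*(\theta)$. The $\delta$-function then contributes a Jacobian $1/|\partial_\rho\phi|$ at $\rho_*$.

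\textbf{Main obstacle: the Jacobian.} The key step is to bound
\[
\frac{\rho_*^{d-1}}{|\partial_\rho\phi(\rho_*\theta)|}\,\langle\rho_*\rangle^{-(d+1)}
\]
integrably in $\theta$. We have
\[
\partial_\rho\phi=-\tfrac32\Big(|k_1|^{1/2}-|k-k_1|^{1/2}\,\tfrac{(k-k_1)\cdot\theta}{|k-k_1|}\Big),
\]
which can degenerate. I would first try to exploit the resonance directly. By the triangle inequality, $|k|^{3/2}=|k_1|^{3/2}+|k_2|^{3/2}$ forces the angle between $k_1$ and $k_2$ to be acute and bounded away from alignment in a quantitative way, which yields $|\partial_\rho\phi|\gtrsim |k_1|^{1/2}$ in the region $|k_1|\le|k_2|$. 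This should follow from convexity: the derivative of $\rho\mapsto\omega_{\rho\theta}+\omega_{k-\rho\theta}$ at the second root exceeds its average slope, which is $0$ at $\rho=0$, by an amount controlled by $\partial_\rho^2\gtrsim\rho^{-1/2}$ for the $|k_1|^{3/2}$ term. The integrand is then bounded by $\rho_*^{d-3/2}\langle\rho_*\rangle^{-(d+1)}$, which is bounded since $d\ge2$. Integrating over $\theta\in S^{d-1}$ gives a constant. If that route fails, I would fall back on the coarea formula on the resonant surface together with a dyadic decomposition in $|k_1|$. Then $\langle k_1\rangle^{-(d+1)}$ beats the surface-measure growth $\lesssim |k_1|^{d-1}\cdot|k_1|^{-1/2}$ per dyadic shell.

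\textbf{Estimates \eqref{3.1421} and \eqref{3.1431}.} These are symmetric to each other, so I treat only \eqref{3.1421}. Eliminate $k_1=k+k_2$, leaving $\int\delta(\omega_{k+k_2}-\omega_k-\omega_{k_2})F\chi_{|k|\ge|k_2|}\,dk_2$, and bound $F\le\langle k_2\rangle^{-(d+1)}\|\cdot\|_\infty$. The cutoff $|k_2|\le|k|$ places us in the same regime as before, with $k_2$ as the smaller leg of the resonant triple $(k_1;k,k_2)$. The radial derivative in $k_2$ is
\[
\tfrac32\Big(|k+k_2|^{1/2}\,\tfrac{(k+k_2)\cdot\theta}{|k+k_2|}-|k_2|^{1/2}\Big),
\]
and the same convexity argument gives the lower bound $\gtrsim|k_2|^{1/2}$. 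The cutoff is essential here: without it, $k_2$ could be the large leg and the weight would not decay along the surface. The resulting integral is again uniformly bounded.
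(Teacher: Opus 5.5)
Your argument for \eqref{3.1411} is correct and follows a different route from the paper. Along each ray $k_1=\rho\theta$, the function $\psi(\rho)=\rho^{3/2}+|k-\rho\theta|^{3/2}$ is strictly convex with $\psi(0)=\omega_k$. So there is at most one positive root $\rho_*$. Writing $\psi'(\rho_*)=\rho_*^{-1}\int_0^{\rho_*}u\,\psi''(u)\,du$ and using $\psi''(u)\ge\frac34u^{-1/2}$ gives $|\partial_\rho\phi|\ge\frac12\rho_*^{1/2}$. This holds at every resonant point, not only where $|k_1|\le|k_2|$. The polar integrand is then at most $2\rho_*^{d-3/2}\langle\rho_*\rangle^{-(d+1)}\le 2$. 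The paper instead bounds the full gradient $|\nabla_{k_2}\Delta\omega|\gtrsim|k|^{1/2}$ via Lemma~\ref{c_11}, slices along directions adapted to that gradient using an angular partition of unity, and counts zeros per line by concavity. Your radial convexity argument is shorter and delivers the root count and the Jacobian bound together.

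The gap is in \eqref{3.1421}, and symmetrically in \eqref{3.1431}.

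\textbf{Why the convexity argument fails.} The ray function there is $\tilde\psi(\rho)=|k+\rho\theta|^{3/2}-\rho^{3/2}$, a difference of convex functions, so the convexity argument does not transfer. In fact the claimed bound $|\partial_\rho\tilde\phi|\gtrsim|k_2|^{1/2}$ is false. Take a resonant point with $|k_2|=|k|$. Then $|k_1|=2^{2/3}|k|$ and $k\cdot k_2=(2^{1/3}-1)|k|^2$, hence $k_1\cdot k_2=2^{1/3}|k|^2$. This gives $k_1\cdot\hat k_2/|k_1|^{1/2}=|k|^{1/2}=|k_2|^{1/2}$, so your own formula for the radial derivative vanishes there.

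\textbf{Consequence.} Rays from the origin are tangent to the resonant surface exactly along the edge $|k_2|=|k|$ of the cutoff region. Hence $\rho_*^{d-1}/|\partial_\rho\tilde\phi|$ is unbounded as $\theta$ approaches these tangency directions. The integral is still finite (a fold-type square-root singularity), but nothing in your proposal shows this.

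\textbf{Repair.} Drop radial slicing and use the full gradient:
$|\nabla_{k_2}\tilde\phi|=\frac32\big|\frac{k_1}{|k_1|^{1/2}}-\frac{k_2}{|k_2|^{1/2}}\big|\ge\frac32\frac{|k_1-k_2|}{|k_1|^{1/2}+|k_2|^{1/2}}$.
Since $k_1-k_2=k$, $|k_1|\le 2^{2/3}|k|$ and $|k_2|\le|k|$, this gives $|\nabla_{k_2}\tilde\phi|\gtrsim|k|^{1/2}$ on the cutoff region. Then either slice along directions adapted to $\nabla\tilde\phi$ (as the paper does, with control on the number of zeros per line), or apply the coarea formula with a surface-measure bound for the resonant surface inside $\{|k_2|\le|k|\}$.

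Incidentally, the cutoff matters not because $\langle k_2\rangle^{-(d+1)}$ fails to decay. It matters because $|\nabla\tilde\phi|\sim|k|\,|k_2|^{-1/2}$ once $|k|\ll|k_2|$, which makes the uncut integral grow like $|k|^{-1}$ as $k\to0$.
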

\begin{proof}
	For (\ref{3.141}), we can split the integral into two parts:
	\begin{equation*}
		G_{F}^1(k)=\iint_{\R^{2d}}
		\delta(k-k_1-k_2)
		\delta(\omega-\omega_{1}-\omega_{2}) \chi_{1\ge 2} F(k,k_1,k_2) \; dk_1dk_2
	\end{equation*}
	\begin{equation*}
		+\iint_{\R^{2d}}
		\delta(k-k_1-k_2)
		\delta(\omega-\omega_{1}-\omega_{2}) \chi_{2>1} F(k,k_1,k_2) \; dk_1dk_2.
	\end{equation*}
	By symmetry, it suffices to consider the first integral. Integrating with respect to \(k_1\) first, we obtain
	\begin{equation*}
		\iint_{\R^{2d}}
		\delta(k-k_1-k_2)
		\delta(\omega-\omega_{1}-\omega_{2}) \chi_{1\ge 2} F(k,k_1,k_2) \; dk_1dk_2
	\end{equation*}
	\begin{equation}\label{6.271}
		=\int_{\R^{d}}
		\delta(\Delta \omega(k,k_2)) \chi_{1\ge 2} F(k,k_1,k_2)|_{k_1=k-k_2} \; dk_2,
	\end{equation}
	where $\Delta \omega(k,k_2)$ denotes
	\begin{equation*}
		\Delta \omega(k,k_2):=|k|^{3/2}-|k-k_2|^{3/2}-|k_2|^{3/2}.
	\end{equation*}
	Simple calculation shows that 
	\begin{equation}\label{c_1}
		\big| \nabla_{k_2} \Delta \omega(k,k_2)\big|=\frac{3}{2}\bigg|\frac{k_1}{|k_1|^{1/2}}-\frac{k_2}{|k_2|^{1/2}}\bigg|\ge \frac{3}{2}\frac{|k_1-k_2|}{|k_1|^{1/2}+|k_2|^{1/2}}.
	\end{equation}
	Moreover, by parallelogram law, we also have
	\begin{equation}\label{c_2}
		|k_1-k_2|\ge \sqrt{2^{2/3}-1}\cdot|k_1+k_2|=\sqrt{2^{2/3}-1}\cdot|k|,
	\end{equation}
	which implies
	\begin{equation}\label{6.27}
		\big| \nabla_{k_2} \Delta \omega(k,k_2)\big|\gtrsim |k|^{1/2}.
	\end{equation}
	Next, we decompose the integration domain according to the direction of the vector
	\[
	V(k_2):=\nabla_{k_2}\Delta\omega(k,k_2).
	\]
	Choose an open covering of the unit sphere \(S^{d-1}\) by finitely overlapping angular sectors
	\(\{C_j\}_{j=1}^N\), where \(N=N(d)\). For each cone \(C_j\), fix a unit vector
	\(e_j\) such that
	\[
	|e_j\cdot V(k_2)|
	\sim |V(k_2)|
	\]
	whenever \(V(k_2)\in C_j\).
	
	Let \(\{\phi_j\}_{j=1}^N\) be a smooth partition of unity subordinate to this covering,
	so that
	\[
	\sum_{j=1}^N\phi_j(V(k_2))=1,
	\qquad
	\forall\,k_2\in \R^d.
	\]
	
	Accordingly,
	\begin{equation*}
		(\ref{6.271})=\sum_{j=1}^N\int_{\mathbb R^{d}}
		\delta(\Delta\omega(k,k_2))
		\chi_{1\ge 2}\phi_j(V(k_2))
		F(k,k_2+k_3-k,k_2,k_3)\; dk_2.
	\end{equation*}
	For each localized piece, we introduce the coordinates
	\[
	k_2=se_j+t,
	\]
	where \(s\in\mathbb R\) denotes the coordinate in the \(e_j\)-direction, and
	\(t\in\mathbb R^{d-1}\) is the corresponding transverse variable.
	Applying the one-dimensional property of the Dirac function with respect to the \(s\)-variable together with (\ref{6.27}), we obtain
	\begin{equation*}
		|(\ref{6.271})|\lesssim \sum_{j=1}^N \int_{\R^{d-1}}\frac{1}{\langle t\rangle^{d}|t|^{1/2}}\sum_{s\in Z_t}\|\langle k_2\rangle^{d}F(k,k_1,k_2)|_{k_2=se_j+t}\|_{L_{k_1}^{\infty}}dt\lesssim \|\langle k_2\rangle^{d}F(k,k_1,k_2)\|_{L_{k_1, k_2}^{\infty}},
	\end{equation*}
	where we used $|k|\ge |k_2|\gtrsim |t|$ and 
\[
Z_t
=
\left\{
s\in\R:
\Delta\omega(k,k_2)\big|_{k_2=se_j+t}=0
\right\}
\]
denotes the set of roots of the resonance equation. By the convexity of the dispersion relation, one can directly show that the cardinality \(|Z_t|\) is uniformly bounded by a constant depending only on the dimension.
	
	Similarly, for (\ref{3.1421}), (\ref{3.1431}), we integrate $k_1$, $k_2$ first, respectively. Take (\ref{3.1421}) for example; one can obtain
	\begin{equation*}
		G_{F\chi_{0\ge 2}}^2(k)
		=\int_{\R^{d}}
		\delta(\Delta \widetilde{\omega}(k,k_2)) \chi_{0\ge 2} F(k,k_1,k_2)|_{k_1=k+k_2} \; dk_2,
	\end{equation*}
	where $\Delta \widetilde{\omega}(k,k_2)$ denotes
	\begin{equation*}
		\Delta \widetilde{\omega}(k,k_2):=|k+k_2|^{3/2}-|k|^{3/2}-|k_2|^{3/2}.
	\end{equation*}
	Moreover, we also have 
	\begin{equation*}
		\big|\nabla_{k_2}\widetilde{\omega}(k,k_2)\big|\gtrsim |k|^{1/2}\ge |k_2|^{1/2}.
	\end{equation*}
	Then, following the same procedure as in the first estimate, one can obtain
	\begin{equation*}
		|G_{F\chi_{0\ge 2}}^2(k)|\lesssim \|\langle k_2\rangle^{d+1} F(k,k_1,k_2)\rbrace\|_{L_{k_1,k_2}^{\infty}}.
	\end{equation*}
\end{proof}
\begin{rem}\label{a}
	From the proof above, we can also derive some variants of (\ref{3.1421}), (\ref{3.1431}). In fact, the following estimates hold:
	\begin{equation}\label{3.14211}
		|G_{F}^2(k)|\lesssim \|\langle k_1\rangle^{d+1} F(k,k_1,k_2)\rbrace\|_{L_{k_1,k_2}^{\infty}},
	\end{equation}
	\begin{equation}\label{3.14311}
		|G_{F}^3(k)|\lesssim \|\langle k_2\rangle^{d+1} F(k,k_1,k_2)\rbrace\|_{L_{k_1,k_2}^{\infty}}.
	\end{equation}

	And for detailed proofs of (\ref{c_1}), (\ref{c_2}) and $|Z_t|=O_d(1)$, we refer to Lemma \ref{c_11} and Lemma \ref{c_22} in the Appendix A. 
\end{rem}

\vspace{7pt}
Now, we are ready to apply Proposition~\ref{3.14} to establish the boundedness of \(Q_{g,b}\).
\begin{lemma}\label{boundedness of Qb}
	$Q_{g,b}(t)$ is a bounded operator on $L_{s}^{p}(\R^d)$, for all $1\le p\le \infty$, $s\ge 0$, satisfying
	\begin{equation*}
		\sup_{t\in [0,T]}\|Q_{g,b}(t)\|_{p,s\to p,s}\lesssim_d \sup_{t\in[0,T]}\|g(t)\|_{\infty,2d+10}.
	\end{equation*}
\end{lemma}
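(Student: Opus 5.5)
Since $Q_{g,b}(t)$ is a multiplication operator, $(Q_{g,b}(t)h)(k)=\mu_g(t,k)\,h(k)$, where
\[
\mu_g(k):=4\pi\, G^2_{|V_{k_1kk_2}|^2 g_1}(k)+4\pi\, G^3_{|V_{k_2kk_1}|^2 g_2}(k),
\]
the operator norm on every $L^p_s(\R^d)$, $1\le p\le\infty$, $s\ge0$, is exactly $\|\mu_g\|_{L^\infty_k}$. The weight $\langle k\rangle^{s}$ commutes with multiplication, so no weight ever has to be moved through the kernel. It therefore suffices to prove
\[
\sup_k|\mu_g(t,k)|\lesssim_d \|g(t)\|_{\infty,2d+10}
\]
uniformly in $t$. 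By the symmetry $k_1\leftrightarrow k_2$ of the two terms, I only need to treat the first term, the $G^2$ piece.

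For the $G^2$ piece I will use the variant (\ref{3.14211}) from Remark~\ref{a}, which puts the weight on $k_1$. This gives
\[
|G^2_{|V_{k_1kk_2}|^2 g_1}(k)|\lesssim \sup_{k_1,k_2}\langle k_1\rangle^{d+1}|V_{k_1kk_2}|^2|g(k_1)|,
\]
with the supremum taken on the resonant set $k_1=k+k_2$, $\omega_1=\omega+\omega_2$. On that set the bound (\ref{R_1}) gives $|V_{k_1kk_2}|^2\lesssim |k_1|\min\{|k|,|k_2|\}^{7/2}\le |k_1|^{9/2}$, because $|k|,|k_2|\le|k_1|$ there (from $\omega_1=\omega+\omega_2$). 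Combining these,
\[
\langle k_1\rangle^{d+1}|V_{k_1kk_2}|^2|g_1|\lesssim \langle k_1\rangle^{d+1+9/2}|g_1|\le \langle k_1\rangle^{2d+10}|g_1|\le \|g\|_{\infty,2d+10},
\]
where the middle inequality uses $d+11/2\le 2d+10$ for $d\ge 2$. The $G^3$ term is identical after swapping the roles of $k_1$ and $k_2$ and using (\ref{3.14311}) and (\ref{R_2}).

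The one step needing care is the legitimacy of applying (\ref{3.14211}) without the cutoff $\chi_{0\ge2}$. Proposition~\ref{3.14} was proved with that cutoff, and the variant relies on integrating out $k_2$ via $k_1=k+k_2$ and then using a lower bound $|\nabla\Delta\widetilde\omega|\gtrsim|k|^{1/2}$, or the analogous lower bound in the $k_1$ variable. If this turns out to be delicate, I would split instead: on $|k|\ge|k_2|$ use (\ref{3.1421}) with weight $\langle k_2\rangle^{d+1}\le\langle k_1\rangle^{d+1}$, and on $|k_2|>|k|$ swap the roles of $k$ and $k_2$, using that the resonant manifold is symmetric in them. Either route needs at most $d+1+9/2$ powers of $\langle k_1\rangle$, which stays well within the $2d+10$ available. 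This leaves slack, consistent with the remark that the exponents are not optimized.
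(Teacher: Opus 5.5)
There is a genuine gap, and it sits exactly at the step you flagged. Your reduction to $\sup_k|\mu_g(k)|$ is fine, and your main route is literally the paper's: bound $|V_{k_1kk_2}|^2\lesssim|k_1|^{9/2}$, then apply (\ref{3.14211}). But (\ref{3.14211}) is false without the cutoff $\chi_{0\ge 2}$.

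On the part of the resonant set where $|k_2|>|k|$, the lower bound $|\nabla_{k_2}\Delta\widetilde{\omega}|\gtrsim|k|^{1/2}$ fails. Using (\ref{v}) with $k_1-k_2=k$ and $|k_1|<2^{2/3}|k_2|$, one only gets $|\nabla_{k_2}\Delta\widetilde{\omega}|\gtrsim|k|\,|k_2|^{-1/2}$. This is sharp when $|k|\ll|k_2|$, where the resonant surface is close to the hyperplane $k_2\perp k$. Use $\Delta\widetilde{\omega}(k,|k|u)=|k|^{3/2}\Delta\widetilde{\omega}(k/|k|,u)$ and rescale $k_2=|k|u$. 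For $F=\langle k_1\rangle^{-(d+1)}$ this gives $G^2_F(k)\sim|k|^{-1}$ as $k\to0$, while the right side of (\ref{3.14211}) equals $1$. Likewise, your intermediate quantity $\iint\delta(k_1-k-k_2)\delta(\omega_1-\omega-\omega_2)|k_1|^{9/2}g_1\,dk_1dk_2$ behaves like $|k|^{-1}$ near $k=0$ for, e.g., $g=\langle k\rangle^{-(2d+10)}$. So the chain of inequalities is broken, not just under-justified.

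Your fallback does not repair this piece. You cannot ``swap $k$ and $k_2$'' in a pointwise bound, because $k$ is the fixed point where the multiplier is evaluated. The $k\leftrightarrow k_2$ symmetry of the resonant manifold only trades $\sup_k\int\cdot\,dk_2$ for $\sup_{k_2}\int\cdot\,dk$, which is an $L^1$-type quantity. Moreover, the measure $\delta(\Delta\widetilde{\omega})\,dk_2$ itself has mass of order $|k|^{-1}$ on $\{|k_2|\sim1\}$. So no change of variables (e.g.\ integrating in $k_1$ instead) and no extra weight in $k_1$ can help.

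The missing idea is to keep the minimum in (\ref{R_1}) instead of bounding it by $|k_1|^{7/2}$. Split the $G^2$ integral at $|k_2|\le|k|$ versus $|k_2|>|k|$.

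On the first piece, your use of (\ref{3.1421}) is correct. On the second piece, $\min\{|k|,|k_2|\}=|k|$ and $|k|<|k_2|\le|k_1|$, so
\[
\frac{|V_{k_1kk_2}|^2}{|\nabla_{k_2}\Delta\widetilde{\omega}|}\lesssim\frac{|k_1|\,|k|^{7/2}}{|k|\,|k_2|^{-1/2}}\le\langle k_1\rangle^{4}.
\]
The factor $|k|^{7/2}$ from the kernel absorbs the degeneration of the gradient. The cone decomposition and slicing of Proposition~\ref{3.14}, with $|Z_t|=O_d(1)$ and $|t|\le|k_2|\le|k_1|$, then bound this piece by $\|\langle k_1\rangle^{d+4}g(k_1)\|_{L^\infty_{k_1}}\le\|g\|_{\infty,2d+10}$.

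The $G^3$ term is handled identically, splitting at $|k_1|\le|k|$ versus $|k_1|>|k|$, since (\ref{3.14311}) fails for the same reason. Note that the paper's proof combines (\ref{R_1}) with (\ref{3.14211}) exactly as you do, so it has the same gap. The lemma itself is true, but only through this finer use of the kernel bound.
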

\begin{proof}
	Define the multiplier $\gamma_i(k)$, $i=1,2$:
	\begin{equation*}
		\gamma_1(k):=\iint_{\R^{2d}} 4\pi
		|V_{k_1 k k_2}|^2
		\delta(k_1-k-k_2)
		\delta(\omega_{1}-\omega-\omega_{2}) g_1 \; dk_1dk_2,
	\end{equation*}
	\begin{equation*}
		\gamma_2(k):=\iint_{\R^{2d}} 4\pi
		|V_{k_2 k k_1}|^2
		\delta(k_2-k-k_1)
		\delta(\omega_{2}-\omega-\omega_{1}) g_2 \; dk_1dk_2.
	\end{equation*}
	We can write 
	\begin{equation*}
		(Q_{g,b}(t)h)(k)=(\gamma_1(k)+\gamma_2(k))h(k), \quad \forall h\in L_{s}^{p}(\R^d).
	\end{equation*}
	Then it suffices to show
	\begin{equation*}
		\sup_{t\in [0,T]}\|\gamma_1\|_{\infty}, \sup_{t\in [0,T]}\|\gamma_2\|_{\infty}\lesssim_d \sup_{t\in[0,T]}\|g(t)\|_{\infty,2d+10}.
	\end{equation*}
	Combining (\ref{R_1}) with (\ref{3.14211}), we can obtain
	\begin{equation*}
		\|\gamma_1\|_{\infty}\lesssim \iint_{\R^{2d}} 
		\delta(k_1-k-k_2)
		\delta(\omega_{1}-\omega-\omega_{2}) |k_1|^{9/2}g_1 \; dk_1dk_2
	\end{equation*}
	\begin{equation*}
		\lesssim \|\langle k_1\rangle^{d+11/2}g(k_1)\|_{L_{k_1}^{\infty}}\le \|g(t)\|_{\infty,2d+10}.
	\end{equation*}
	Similarly, applying (\ref{R_2}) and (\ref{3.14311}), one can check that
	\begin{equation*}
		\|\gamma_2\|_{\infty}\lesssim \|\langle k_2\rangle^{d+11/2}g(k_2)\|_{L_{k_2}^{\infty}}\le \|g(t)\|_{\infty,2d+10}.
	\end{equation*}
\end{proof}

\vspace{7pt}
To establish the boundedness of the operator \(\mathcal{R}_g(t)\), we further introduce the notation
\begin{equation*}
	\mathcal{R}_{g}(t) \varphi= \iint_{\R^{2d}} 8\pi
	|V_{k k_1 k_2}|^2
	\delta(k-k_1-k_2)
	\delta(\omega-\omega_{1}-\omega_{2}) g_1 \dfrac{\langle k\rangle^{a}-\langle k_2\rangle^{a}}{\langle k_2\rangle^{a}} \varphi_2 \chi_{2\ge 1} \; dk_1dk_2
\end{equation*}
\begin{equation*}
	+\iint_{\R^{2d}} 4\pi
	|V_{k_1 k k_2}|^2
	\delta(k_1-k-k_2)
	\delta(\omega_1-\omega-\omega_{2}) g_2 \dfrac{\langle k\rangle^{a}-\langle k_1\rangle^{a}}{\langle k_1\rangle^{a}} \varphi_1 \; dk_1dk_2
\end{equation*}
\begin{equation*}
	+\iint_{\R^{2d}} 4\pi
	|V_{k_2 k k_1}|^2
	\delta(k_2-k-k_1)
	\delta(\omega_2-\omega-\omega_{1}) g_1 \dfrac{\langle k\rangle^{a}-\langle k_2\rangle^{a}}{\langle k_2\rangle^{a}} \varphi_2 \; dk_1dk_2
\end{equation*}
\begin{equation*}
	=:\mathcal{R}_{g}^1(t) \varphi+\mathcal{R}_{g}^2(t) \varphi+\mathcal{R}_{g}^3(t) \varphi.
\end{equation*}

\vspace{5pt}
\begin{lemma}\label{boundedness of R}
	$\mathcal{R}_{g}(t)$ is bounded on $L^{p}(\R^d)$ for all $1\le p\le \infty$, satisfying
	\begin{equation*}
		\|\mathcal{R}_{g}(t)\|_{p\to p}\lesssim_d \|g\|_{\infty,2d+10}.
	\end{equation*}
\end{lemma}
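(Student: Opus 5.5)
We bound each of the three pieces $\mathcal{R}_g^1,\mathcal{R}_g^2,\mathcal{R}_g^3$ separately on $L^p$. The tool is the Schur test: each $\mathcal{R}_g^i$ is an integral operator $\varphi\mapsto\int K_i(k,k')\varphi(k')\,dk'$, obtained by integrating out the remaining variable against the delta functions. It therefore suffices to show
\[
\sup_k\int |K_i(k,k')|\,dk'\lesssim \|g\|_{\infty,2d+10},\qquad \sup_{k'}\int |K_i(k,k')|\,dk\lesssim \|g\|_{\infty,2d+10}.
\]
Interpolating between the $L^\infty$ and $L^1$ bounds (or applying Schur directly) then gives the result for all $1\le p\le\infty$. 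The $L^\infty$ bound is of the form $G_F^i(k)$ from Proposition~\ref{3.14}. The $L^1$ bound, after the changes of variables $k\leftrightarrow k_2$ and $k\leftrightarrow k_1$ used in Lemma~\ref{dissipative lemma}, again becomes a collision integral of type $G^1,G^2,G^3$ evaluated at the new outer variable. So every estimate reduces to Proposition~\ref{3.14} and Remark~\ref{a}, combined with the kernel bounds (\ref{R})--(\ref{R_2}).

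The key pointwise input is control of the weight ratio $\bigl|\langle k\rangle^a-\langle k_j\rangle^a\bigr|/\langle k_j\rangle^a$ on the resonant manifold. The size of $a$ matters for the constant, but the aim is a bound by powers of the \emph{small} momentum, which is then absorbed by $g$.

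For $\mathcal{R}^1_g$ we have $k=k_1+k_2$ and $|k_2|\ge|k_1|$, so $|k|\sim|k_2|$. The mean value theorem then gives $|\langle k\rangle^a-\langle k_2\rangle^a|\lesssim_a \langle k_2\rangle^{a-1}|k_1|$, so the ratio is $\lesssim |k_1|$. Combining this with (\ref{R}), the integrand is bounded by $|k|\,|k_1|^{9/2}g_1\lesssim |k|\langle k_1\rangle^{-(2d+10)+9/2}\|g\|_{\infty,2d+10}$. The remaining factor $|k|$ is a problem, and I expect this to be the main obstacle: $\langle k_1\rangle$-decay alone does not supply decay in $|k|$. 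To handle it I would use the lower bound (\ref{6.27}) more carefully. In the proof of Proposition~\ref{3.14} the delta contributes a factor $|\nabla\Delta\omega|^{-1}\lesssim |k|^{-1/2}$. More precisely, I expect that for $|k_1|\ll|k_2|$ the sharper bound $|\nabla_{k_1}\Delta\omega|\gtrsim |k|^{1/2}$ together with the fact that the resonant set is a thin shell lets one trade the factor $|k|$ for powers of $|k_1|$. If that fails, I would instead re-estimate $|V|^2$ directly from Proposition~\ref{collision kernel}: the bracket is $\lesssim |k_1|/|k_2|$, so $|V|^2\lesssim (\omega\omega_1\omega_2)|k_1|^2/|k_2|^2\sim |k|\,|k_1|^{7/2}$. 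Here the factor $|k|$ is genuine, so it must be killed either by the weight-difference cancellation or by the integration, and I would keep track of it through the $L^1$ (dual) bound, where $k_2$ is fixed and integrating over $k_1$ with the delta yields $|k_2|^{-1/2}$.

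For $\mathcal{R}^2_g$ and $\mathcal{R}^3_g$ the outer variable $k$ is a \emph{smaller} wave: $k_1=k+k_2$ with $|k_1|\ge|k|,|k_2|$. Here the ratio $\langle k\rangle^a/\langle k_1\rangle^a\le1$, so the ratio is bounded by $1$ in absolute value. Since $g_2$ sits on the other small momentum, I would split into the regions $|k|\ge|k_2|$ and $|k|<|k_2|$. On $|k|\ge|k_2|$ we have $|k_1|\sim|k|$, so the mean value theorem gives a ratio $\lesssim|k_2|$, and (\ref{R_1}) gives $|V|^2\lesssim |k_1||k_2|^{7/2}$. We then use (\ref{3.1421}) with the weight on $k_2$, which $g_2$ absorbs. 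On $|k|<|k_2|$ the ratio is $O(1)$, and (\ref{R_1}) gives $|k_1||k|^{7/2}\lesssim \langle k_2\rangle^{9/2}$. Again $g_2$ absorbs this via (\ref{3.14311})-type bounds with weight on $k_2$, using $2d+10\ge d+1+9/2+1$. $\mathcal{R}^3_g$ is identical with the indices $1\leftrightarrow2$ swapped. The dual ($L^1$) bounds follow by the same case split after relabelling variables.
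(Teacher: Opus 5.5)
\textbf{Gap: the weight-ratio bound discards the factor that closes the estimate.} Your architecture matches the paper's: the Schur test, reducing the $L^\infty$ and dual $L^1$ bounds to Proposition~\ref{3.14} and Remark~\ref{a}, and splitting $\mathcal{R}^2_g$ at $|k|=|k_2|$. The proof fails as written only because of how you bound the weight ratio.

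For $\mathcal{R}^1_g$ you correctly get $|\langle k\rangle^a-\langle k_2\rangle^a|\lesssim_a\langle k_2\rangle^{a-1}|k_1|$. After dividing by $\langle k_2\rangle^a$, however, you keep only ``$\lesssim|k_1|$'' and throw away $\langle k_2\rangle^{-1}\sim\langle k\rangle^{-1}$. That factor is exactly what is needed. The ratio is $\lesssim|k_1|/\langle k\rangle$, so by (\ref{R}) the integrand is $\lesssim|k_1|^{9/2}g_1|\varphi_2|\chi_{2\ge1}$. Then (\ref{3.1411}) gives the $L^\infty$ bound by $\|\langle k_1\rangle^{d+11/2}g\|_{\infty}\le\|g\|_{\infty,2d+10}$, and the dual bound follows by fixing $k_2$ and using (\ref{sssss}).

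So the ``main obstacle'' you flag is self-inflicted, and your proposed remedies cannot remove it. The delta function only contributes a Jacobian $|\nabla\Delta\omega|^{-1}\sim|k|^{-1/2}$ (or $|k_2|^{-1/2}\sim|k|^{-1/2}$ in the dual bound), which leaves $|k|^{1/2}$. This is a defect of your majorant, not of the method. For $|k|\gg1$ and $|k_1|\lesssim1$, the resonant set in $k_1$ is essentially the hyperplane $\widehat{k}\cdot k_1\approx0$, with $|\nabla_{k_1}\Delta\omega|\approx\frac32|k|^{1/2}$. Hence the integral of $|k|\,|k_1|^{9/2}g_1$ genuinely grows like $|k|^{1/2}$ (e.g.\ for $g=\langle k\rangle^{-(2d+10)}$), and no finer analysis of the shell can rescue it.

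\textbf{The same loss recurs in $\mathcal{R}^2_g$.} On $\{|k|\ge|k_2|\}$ you bound the ratio by ``$\lesssim|k_2|$'' and use $|V_{k_1kk_2}|^2\lesssim|k_1||k_2|^{7/2}$. Your integrand is then $|k_1||k_2|^{9/2}g_2$ with $|k_1|\sim|k|$ unbounded, and $g_2$ cannot absorb a power of $|k|\ge|k_2|$, contrary to what you assert. The mean value theorem actually gives $\lesssim|k_2|/\langle k_1\rangle$; this is (\ref{ss}). That cancels $|k_1|$ and leaves $|k_2|^{9/2}g_2$, which (\ref{3.1421}) and its dual handle.

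The missing observation is this. Where the weighted variable is comparable to the largest momentum $k_{\max}$ (the whole support of $\mathcal{R}^1_g$, and $|k|\ge|k_2|$ for $\mathcal{R}^2_g$), the weight difference carries a factor $\langle k_{\max}\rangle^{-1}$. That factor cancels the single power of $|k_{\max}|$ in (\ref{R})--(\ref{R_2}).

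Your treatment of the region $|k|<|k_2|$ is fine: the ratio is $O(1)$, $|k_1|\sim|k_2|$, and everything is absorbed by $g_2$. There the relevant bound is (\ref{3.14211}) with weight on $k_1\sim k_2$, rather than (\ref{3.14311}).
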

\begin{proof}
	It reduces to show 
	\begin{equation*}
		\|\mathcal{R}_{g}^i(t)\|_{p\to p}\lesssim_d \|g\|_{\infty,2d+10}, \quad \forall i=1,2,3.
	\end{equation*}
	By interpolation, we only need to deal with $p=1,\infty$.
	
	Note that, for $\mathcal{R}_g^1$, we have $|k|\sim |k_2|\ge |k_1|$. Then applying mean value theorem, one can see
	\begin{equation}\label{s'}
		\Bigg|\frac{\langle k\rangle^{a}-\langle k_2\rangle^{a}}{\langle k_2\rangle^{a}}\Bigg|\lesssim_a \frac{\big||k|-|k_2|\big|\langle k_2\rangle^{a-1}}{\langle k_2\rangle^{a}}\lesssim  \frac{|k_1|}{\langle k\rangle}.
	\end{equation}
	Then combining (\ref{s'}) with (\ref{R}), (\ref{3.1411}), one can obtain
	\begin{equation*}
		|\mathcal{R}_g^1\varphi(k)|\lesssim \Bigg|\iint_{\R^{2d}} 
		\delta(k-k_1-k_2)
		\delta(\omega-\omega_{1}-\omega_{2}) |k_1|^{9/2}g_1 \varphi_2 \chi_{2\ge 1} \; dk_1dk_2\Bigg|
	\end{equation*}
	\begin{equation*}
		\lesssim \|\langle k_1\rangle^{d+11/2} g(k_1)\|_{L_{k_1}^{\infty}}\|\varphi\|_{\infty}\le \|g\|_{\infty, 2d+10}\|\varphi\|_{\infty}.
	\end{equation*}
	Thus, we have verified the case $p=\infty$. 
	
	Now, for $\varphi\in L^1(\R^d)$, we can similarly derive
	\begin{equation*}
		\int_{\R^d}|\mathcal{R}_g^1\varphi(k)|dk\lesssim \iiint_{\R^{3d}} 
		\delta(k-k_1-k_2)
		\delta(\omega-\omega_{1}-\omega_{2}) |k_1|^{9/2}g_1 |\varphi_2| \chi_{2\ge 1} \; dkdk_1dk_2
	\end{equation*}
	\begin{equation*}
		=\int_{\R^d} |\varphi_2| dk_2\iint_{\R^{2d}}\delta(k-k_1-k_2)
		\delta(\omega-\omega_{1}-\omega_{2}) |k_1|^{9/2}g_1 \chi_{2\ge 1} \; dkdk_1
	\end{equation*}
	\begin{equation}\label{sss}
		\lesssim \|\varphi\|_1 \sup_{k_2}\left\{\iint_{\R^{2d}}\delta(k-k_1-k_2)
		\delta(\omega-\omega_{1}-\omega_{2}) |k_1|^{9/2}g_1 \chi_{2\ge 1} \; dkdk_1\right\}.
	\end{equation}
	For the integral in the bracket, we first integrate on $k$, which yields
	\begin{equation*}
		\iint_{\R^{2d}}\delta(k-k_1-k_2)
		\delta(\omega-\omega_{1}-\omega_{2}) |k_1|^{9/2}g_1 \chi_{2\ge 1} \; dkdk_1
	\end{equation*}
	\begin{equation}\label{ssss}
		=\int_{\R^d} 
		\delta(\Delta\omega(k_1,k_2)) |k_1|^{9/2}g_1 \chi_{2\ge 1} dk_1,
	\end{equation}
	where $\Delta\omega(k_1,k_2):=|k_1+k_2|^{3/2}-|k_1|^{3/2}-|k_2|^{3/2}$.
	
A simple calculation shows that
\begin{equation}\label{sssss}
	\Big| \nabla_{k_1}\Delta\omega(k_1,k_2)\Big|=\frac{3}{2}\bigg|\frac{k_1+k_2}{|k_1+k_2|^{1/2}}-\frac{k_1}{|k_1|^{1/2}}\bigg|\gtrsim |k_2|^{1/2}\ge |k_1|^{1/2}.
\end{equation}
We can then follow the same argument as in Proposition~\ref{3.14} to obtain
\begin{equation*}
	\int_{\R^d} 
	\delta(\Delta\omega(k_1,k_2)) |k_1|^{9/2}g_1 \chi_{2\ge 1} dk_1\lesssim \|\langle k_1\rangle^{d+5}g(k_1)\|_{L_{k_1}^{\infty}}\le \|g\|_{\infty, 2d+10},
\end{equation*}
which establishes the \(L^{1}\)-boundedness of the operator \(\mathcal{R}_g^1(t)\).

Next, we establish the boundedness of \(\mathcal{R}_g^2(t)\) and \(\mathcal{R}_g^3(t)\). By symmetry, it suffices to consider \(\mathcal{R}_g^2(t)\).
	
	Splitting $\mathcal{R}_g^2(t)$ into two parts:
	\begin{equation*}
		\mathcal{R}_g^2(t)\varphi=\iint_{\R^{2d}} 4\pi
		|V_{k_1 k k_2}|^2
		\delta(k_1-k-k_2)
		\delta(\omega_1-\omega-\omega_{2}) g_2 \dfrac{\langle k\rangle^{a}-\langle k_1\rangle^{a}}{\langle k_1\rangle^{a}} \varphi_1 \chi_{0\ge 2} \; dk_1dk_2
	\end{equation*}
	\begin{equation*}
		+\iint_{\R^{2d}} 4\pi
		|V_{k_1 k k_2}|^2
		\delta(k_1-k-k_2)
		\delta(\omega_1-\omega-\omega_{2}) g_2 \dfrac{\langle k\rangle^{a}-\langle k_1\rangle^{a}}{\langle k_1\rangle^{a}} \varphi_1 \chi_{2>0} \; dk_1dk_2
	\end{equation*}
	\begin{equation*}
		=: \mathcal{R}_g^{21}(t)\varphi+\mathcal{R}_g^{22}(t)\varphi.
	\end{equation*}
	Note that, by mean value theorem, we also have 
	\begin{equation}\label{ss}
		\bigg|\dfrac{\langle k\rangle^{a}-\langle k_1\rangle^{a}}{\langle k_1\rangle^{a}}\bigg|\lesssim \frac{|k_2|}{\langle k_1\rangle},
	\end{equation}
	when $|k|\ge |k_2|$.
	
	Then applying (\ref{3.1421}) together with (\ref{R_1}) and (\ref{ss}), one can derive the $L^\infty$-boundedness
	\begin{equation*}
		|\mathcal{R}_g^{21}(t)\varphi(k)|\lesssim \|\langle k_2\rangle^{d+11/2} g(k_2)\|_{L_{k_2}^{\infty}}\|\varphi\|_{\infty}\le \|g\|_{\infty, 2d+10}\|\varphi\|_{\infty}.
	\end{equation*}

Following a similar procedure as in (\ref{sss})-(\ref{sssss}), the $L^1$-boundedness also holds:
\begin{equation*}
	\int_{\R^d}|\mathcal{R}_g^{21}(t)\varphi(k)|dk \lesssim \|\varphi\|_1\|\langle k_2\rangle^{d+5}g(k_2)\|_{L_{k_2}^{\infty}}\le \|\varphi\|_1\|g\|_{\infty, 2d+10}.
\end{equation*}

Finally, for $\mathcal{R}_g^{22}(t)$, the following trivial bound holds:
\begin{equation*}
	\bigg|\dfrac{\langle k\rangle^{a}-\langle k_1\rangle^{a}}{\langle k_1\rangle^{a}}\bigg|\lesssim 1.
\end{equation*}
Since the integrand is restricted to $\lbrace |k_1|\sim |k_2|\ge |k|\rbrace$, any such factors can be absorbed into $g_2$. One can then easily derive
\begin{equation*}
	|\mathcal{R}_g^{22}(t)\varphi(k)|\lesssim \|\langle k_2\rangle^{d+6} g(k_2)\|_{L_{k_2}^{\infty}}\|\varphi\|_{\infty}\le \|g\|_{\infty, 2d+10}\|\varphi\|_{\infty}, 
\end{equation*}
\begin{equation*}
	\int_{\R^d}|\mathcal{R}_g^{22}(t)\varphi(k)|dk \lesssim \|\varphi\|_1\|\langle k_2\rangle^{d+6}g(k_2)\|_{L_{k_2}^{\infty}}\le \|\varphi\|_1\|g\|_{\infty, 2d+10}.
\end{equation*}
Thus, we have completed the proof of the $L^p$-boundedness of $\mathcal{R}_g(t)$.

\end{proof}

\vspace{10pt}
\section{Energy estimate}
With the lemmas established in the previous section, we can now derive a key energy estimate for the iteration scheme (\ref{iter}).

Recall that the weighted solution $\langle k\rangle^{a}n_{\ell+1} $ satisfies
\begin{equation*}
	\begin{cases}
		\dfrac{\partial }{\partial t}(\langle k\rangle^{a} n_{\ell+1})=[Q_{\ell}(t)+\mathcal{R}_{\ell}(t)](\langle k\rangle^{a} n_{\ell+1}),  \\[4pt]
		\langle k\rangle^{a} n_{\ell+1}(0,k)=\langle k\rangle^{a}n_0(k), \qquad (t,k)\in \R^{+} \times \R^d.
	\end{cases}
\end{equation*}

We also denote $f_\ell:=\langle k\rangle^a n_{\ell+1}$ and denote the propagator of $[Q_{\ell}(t)+\mathcal{R}_{\ell}(t)]$ by $U_{\ell}(t,s)$ for $0\le s\le t$. Then we have $f_\ell=U_{\ell}(t,0)(\langle k\rangle^{a}n_0)$.

\begin{rem}\label{approx}
It is worth mentioning that the propagator $U_\ell(t,s)$ should be understood as a strong limit of a family of regularized operators. In fact, the dissipativity of $Q_{\ell,D}$ alone is not sufficient to guarantee the existence of a unique semigroup. Detailed discussions are deferred to Appendix B.
\end{rem}

\begin{lemma}\label{energy estimate}
	For $T>0$, $a\ge0$, $1\le p\le \infty$, we have the following energy estimate:
	\begin{equation*}
		\|n_{\ell+1}(t)\|_{p,a}=\|f_\ell(t)\|_{p}\le\exp \left(C_a t\cdot \sup_{t\in [0,T]}\|n_{\ell}(t)\|_{\infty, 2d+10}\right)\|n_0\|_{p,a},
	\end{equation*}
	where $C_a>0$ depends only on $a,d$.
	
\end{lemma}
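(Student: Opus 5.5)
The plan is to treat the weighted equation \eqref{weighted equation} as a bounded perturbation of a dissipative evolution and close with a Gronwall argument. First I split the generator as
\[
Q_\ell(t)+\mathcal{R}_\ell(t)=Q_{\ell,D}(t)+\bigl(Q_{\ell,b}(t)+\mathcal{R}_\ell(t)\bigr),
\]
where $Q_{\ell,D}:=Q_{n_\ell,D}$ and $Q_{\ell,b}:=Q_{n_\ell,b}$. The bounded part is estimated with results already established. Lemma \ref{boundedness of Qb} (with $s=0$) and Lemma \ref{boundedness of R} give, for every $1\le p\le\infty$,
\[
\|Q_{\ell,b}(t)+\mathcal{R}_\ell(t)\|_{p\to p}\le C_a\|n_\ell(t)\|_{\infty,2d+10}\le C_a\sup_{t\in[0,T]}\|n_\ell(t)\|_{\infty,2d+10}=:C_aN_\ell .
\]
The constant depends on $a$ only through the mean value estimates \eqref{s'} and \eqref{ss}.

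For $1\le p<\infty$ I work directly with the $L^p$ norm of $f_\ell\ge0$; nonnegativity holds by Remark \ref{nonnegative}, since $\langle k\rangle^a n_{\ell+1}\ge0$. I differentiate:
\[
\frac{1}{p}\frac{d}{dt}\|f_\ell\|_p^p=\bigl(f_\ell^{p-1},Q_{\ell,D}f_\ell\bigr)+\bigl(f_\ell^{p-1},(Q_{\ell,b}+\mathcal{R}_\ell)f_\ell\bigr).
\]
The first term is $\le0$ by Lemma \ref{dissipative lemma}, using inequality \eqref{d} applied with $h=f_\ell$ and $g=n_\ell\ge0$. The second term is bounded via Hölder's inequality by $\|f_\ell\|_p^{p-1}\cdot C_aN_\ell\|f_\ell\|_p$. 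This yields
\[
\frac{d}{dt}\|f_\ell\|_p\le C_aN_\ell\|f_\ell\|_p,
\]
and Gronwall's inequality gives
\[
\|f_\ell(t)\|_p\le e^{C_aN_\ell t}\|\langle k\rangle^a n_0\|_p=e^{C_aN_\ell t}\|n_0\|_{p,a}.
\]
For $p=\infty$ I take the limit $p\to\infty$; this is legitimate because the constant is independent of $p$. Alternatively, I can use the resolvent characterization in Definition \ref{def} directly with the $L^\infty$ duality set.

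The main obstacle is rigor. By Remark \ref{approx}, $f_\ell$ is only defined through the propagator $U_\ell(t,s)$, obtained as a strong limit of regularized operators. It is therefore not a priori a Schwartz function, and the formal differentiation of $\|f_\ell\|_p^p$ is not immediately justified. My approach is to run the above estimate at the level of the regularized problems from Appendix B: truncate the kernels, for example by cutting off large $|k|,|k_1|,|k_2|$ and small $|\nabla\Delta\omega|$, so that the truncated generators are bounded. I would arrange the truncation so that the dissipative part keeps the pairing structure of $\Gamma$ in Lemma \ref{dissipative lemma}, i.e.\ it remains dissipative, and so that the bounds on $Q_b$ and $\mathcal{R}$ stay uniform in the truncation.

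For bounded generators the propagator is a norm-convergent Dyson series, and the Trotter-type product formula gives
\[
\|U^\epsilon(t,s)\|_{p\to p}\le e^{C_aN_\ell(t-s)}.
\]
This follows because the dissipative factor is a contraction; that is exactly the Lumer–Phillips consequence of Definition \ref{def}. The estimate then passes to the strong limit $U_\ell(t,0)$ by lower semicontinuity of the norm, which gives the claimed bound uniformly in $\ell$.
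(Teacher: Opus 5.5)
Your proposal is essentially the paper's own proof. You split the generator into the dissipative $Q_{\ell,D}$ and the bounded part $Q_{\ell,b}+\mathcal{R}_\ell$, differentiate $\|f_\ell\|_p^p$, and discard the dissipative pairing via Lemma~\ref{dissipative lemma}. You then bound the remainder with Lemmas~\ref{boundedness of Qb}, \ref{boundedness of R} and H\"older, and close with Gronwall. Your regularization discussion matches what the paper defers to Remark~\ref{approx} and Appendix~B, where the cutoffs $e^{-\varepsilon(|k_i|+|k_j|)}$ are chosen to respect the pairings, as you propose.

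One caution about $p=\infty$, which the paper also treats only formally. Letting $p\to\infty$ requires $f_\ell(t)\in L^p$ for some finite $p$. That fails for data only in $L^\infty_a$, which is exactly the case used in Corollary~\ref{uniform}. You can fix this in either of two ways:
\begin{itemize}
\item apply the $L^p$ bound to $\langle k\rangle^{a-b}n_{\ell+1}$ with $bp>d$, send $p\to\infty$, then let $b\to0$, using that $C_{a-b}$ stays bounded;
\item use the $L^\infty$ duality-set argument you mention.
\end{itemize}
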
 
\begin{proof}
	Taking the time derivative of the $L^p$-norm, one can obtain
	\begin{equation*}
		\frac{1}{p}\frac{d}{dt}\|f_{\ell}(t)\|_p^p=\left(\partial_t f_{\ell}(t), f_{\ell}^{p-1}(t)\right)_{L^2}=\left(Q_{\ell}(t) f_{\ell}, f_{\ell}^{p-1}(t)\right)_{L^2}+\left(\mathcal{R}_\ell(t) f_{\ell}, f_{\ell}^{p-1}(t)\right)_{L^2}
	\end{equation*}
	\begin{equation*}
		=\left(Q_{\ell,D}(t) f_{\ell}, f_{\ell}^{p-1}(t)\right)_{L^2}+\left(Q_{\ell,b}(t) f_{\ell}, f_{\ell}^{p-1}(t)\right)_{L^2}+\left(\mathcal{R}_\ell(t) f_{\ell}, f_{\ell}^{p-1}(t)\right)_{L^2}.
	\end{equation*}
	From Lemma \ref{dissipative lemma}, we know $Q_{\ell,D}$ is dissipative, which implies
	\begin{equation*}
		\left(Q_{\ell,D}(t) f_{\ell}, f_{\ell}^{p-1}(t)\right)_{L^2}\le0.
	\end{equation*}
	Applying Lemma \ref{boundedness of Qb} and Lemma \ref{boundedness of R}, i.e., the boundedness of $Q_{\ell,b}$ and $\mathcal{R}_\ell$, one can derive
	\begin{equation*}
		\bigg|\left(Q_{\ell,b}(t) f_{\ell}, f_{\ell}^{p-1}(t)\right)_{L^2}\bigg|\lesssim \sup_{t\in [0,T]}\|n_{\ell}(t)\|_{\infty, 2d+10}\|f_\ell\|_p\|f_\ell^{p-1}\|_{p'}
	\end{equation*}
	\begin{equation*}
		=\sup_{t\in [0,T]}\|n_{\ell}(t)\|_{\infty, 2d+10}\|f_\ell\|_p^p,
	\end{equation*}
	\begin{equation*}
		\bigg|\left(\mathcal{R}_{\ell}(t) f_{\ell}, f_{\ell}^{p-1}(t)\right)_{L^2}\bigg|\lesssim \sup_{t\in [0,T]}\|n_{\ell}(t)\|_{\infty, 2d+10}\|f_\ell\|_p\|f_\ell^{p-1}\|_{p'}
	\end{equation*}
	\begin{equation*}
		=\sup_{t\in [0,T]}\|n_{\ell}(t)\|_{\infty, 2d+10}\|f_\ell\|_p^p,
	\end{equation*}
	where $p'=\frac{p}{p-1}$.
	
	Thus, we have the following differential inequality:
	\begin{equation*}
		\frac{d}{dt}\|f_{\ell}(t)\|_p\lesssim \left(\sup_{t\in [0,T]}\|n_{\ell}(t)\|_{\infty, 2d+10}\right)\|f_\ell(t)\|_p.
	\end{equation*}
	Applying Gronwall's inequality, we obtain the desired energy estimate.
\end{proof}
\begin{rem}\label{t-s}
	Similarly, we can also derive a slightly more general energy estimate:
	\begin{equation}\label{j}
		\|U_{\ell}(t,s)h\|_{p}\le\exp \left(C_a (t-s)\cdot \sup_{t\in [0,T]}\|n_{\ell}(t)\|_{\infty, 2d+10}\right)\|h\|_{p}, \quad \forall h\in L^p(\R^d),
	\end{equation}
	where $0\le s\le t$.
\end{rem}

As a direct corollary, the sequence $\{n_{\ell}\}_\ell$ is uniformly bounded in some space.
\begin{coro}\label{uniform}
	Assume the initial data $n_0\in L_{4d+12}^{\infty}(\R^d)$, then there exists 
	\begin{equation*}
		T_0=T_0\left(\|n_0\|_{\infty, 4d+12}\right)>0,
	\end{equation*}
	such that the following uniform bound holds:
	\begin{equation}\label{l}
		\sup_{t\in [0,T]}\|n_\ell(t)\|_{\infty, 4d+12}\le 2\|n_0\|_{\infty, 4d+12}, \quad \forall \ell\ge 1.
	\end{equation}
\end{coro}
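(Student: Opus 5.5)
The plan is to argue by induction on $\ell$, using the energy estimate of Lemma~\ref{energy estimate} with $p=\infty$ and $a=4d+12$. The point is that the weight $4d+12$ dominates the weight $2d+10$ appearing in the exponent. Set $M:=\|n_0\|_{\infty,4d+12}$ and choose
\begin{equation*}
T_0:=\frac{\ln 2}{2C_{4d+12}\,M}
\end{equation*}
(any $T_0>0$ works if $M=0$). Here $C_{4d+12}$ is the constant from Lemma~\ref{energy estimate} with $a=4d+12$; it depends only on $d$. The claim (\ref{l}) is then to be shown for all $T\le T_0$, and by monotonicity it suffices to take $T=T_0$.

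The base case $\ell=1$ is immediate, since $n_1(t,k)\equiv n_0(k)$ gives $\sup_t\|n_1(t)\|_{\infty,4d+12}=M\le 2M$. For the inductive step, assume (\ref{l}) holds for $n_\ell$ on $[0,T_0]$. Because $\langle k\rangle^{2d+10}\le\langle k\rangle^{4d+12}$, we have
\begin{equation*}
\sup_{t\in[0,T_0]}\|n_\ell(t)\|_{\infty,2d+10}\le \sup_{t\in[0,T_0]}\|n_\ell(t)\|_{\infty,4d+12}\le 2M.
\end{equation*}
Applying Lemma~\ref{energy estimate} with $p=\infty$ and $a=4d+12$ to the iterate $n_{\ell+1}$ (equivalently to $f_\ell=\langle k\rangle^{4d+12}n_{\ell+1}$ solving (\ref{weighted equation})), we get for $t\in[0,T_0]$
\begin{equation*}
\|n_{\ell+1}(t)\|_{\infty,4d+12}\le \exp\bigl(C_{4d+12}\,T_0\cdot 2M\bigr)M=e^{\ln 2}M=2M,
\end{equation*}
which closes the induction. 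Since $T_0$ does not depend on $\ell$, the bound holds uniformly in $\ell$.

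The step needing most care is justifying the use of Lemma~\ref{energy estimate} at each stage. This requires the propagator $U_\ell(t,0)$ to be well defined (Remark~\ref{approx}, Appendix~B) and $n_{\ell+1}\ge0$ (Remark~\ref{nonnegative}), since dissipativity in Lemma~\ref{dissipative lemma} was proved for nonnegative $h$ and nonnegative $g=n_\ell$. It also requires that the $p=\infty$ case be legitimate. I would obtain it either directly from the dissipativity statement for $p=\infty$, or by taking $p\to\infty$ in the $L^p$ estimate: the exponential factor is independent of $p$, and $\|f\|_p\to\|f\|_\infty$ for functions in $L^{p_0}\cap L^\infty$. In the regularized setting of Appendix~B this holds, and the bound survives the strong limit. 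Finally, the hypotheses of Lemmas~\ref{boundedness of Qb} and~\ref{boundedness of R} only require $n_\ell\in L^\infty_{2d+10}$, and the induction hypothesis supplies exactly that.
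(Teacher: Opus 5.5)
Your proposal is correct and is essentially the paper's argument: induction on $\ell$ starting from $n_1\equiv n_0$, applying Lemma~\ref{energy estimate} with $p=\infty$ and $a=4d+12$, using $\|n_\ell\|_{\infty,2d+10}\le\|n_\ell\|_{\infty,4d+12}\le 2\|n_0\|_{\infty,4d+12}$, and choosing $T$ so that $\exp(2C_aT\|n_0\|_{\infty,4d+12})\le 2$ (the paper takes $T\le 1/(100C_a\|n_0\|_{\infty,4d+12})$, while you take the sharp value $\ln 2/(2C_aM)$). One caveat concerns your optional $p\to\infty$ justification: $\langle k\rangle^{4d+12}n_0$ need not lie in any $L^{p_0}$ with $p_0<\infty$ (e.g.\ $n_0=\langle k\rangle^{-4d-12}$), so that route would also need a truncation of the data together with linearity and positivity of the propagator, whereas invoking the $p=\infty$ case of the lemma directly, as the paper does, avoids this.
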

\begin{proof}
	Applying Lemma \ref{energy estimate} with $a=4d+12$ and $p=\infty$, we can derive the energy estimate
	\begin{equation}\label{ll}
		\sup_{t\in [0,T]}\|n_{\ell+1}(t)\|_{\infty}\le\exp \left(C_a T\cdot \sup_{t\in [0,T]}\|n_{\ell}(t)\|_{\infty, 2d+10}\right)\|n_0\|_{\infty,4d+12}.
	\end{equation}
	From definition, $n_{1}(t,k)\equiv n_0(k)$, then (\ref{l}) holds trivially when $\ell=1$.
	
	Suppose that the induction hypothesis
	\begin{equation*}
		\sup_{t\in [0,T]}\|n_\ell(t)\|_{\infty, 4d+12}\le 2\|n_0\|_{\infty, 4d+12}
	\end{equation*}
	holds for $\ell=k$. Letting $\ell=k$ in (\ref{ll}), we obtain
	\begin{equation*}
		\sup_{t\in [0,T]}\|n_{\ell+1}(t)\|_{\infty}\le\exp \left(2C_a T\|n_0\|_{\infty, 4d+12}\right)\|n_0\|_{\infty,4d+12}.
	\end{equation*}
	By taking $T$ sufficiently small such that
	\begin{equation*}
		0<T\le \frac{1}{100C_a \|n_0\|_{\infty,4d+12}},
	\end{equation*}
	we can close the induction.
\end{proof}

\vspace{10pt}
\section{Proof of local existence}
To show that the sequence $\lbrace n_{\ell}\rbrace_\ell$ in the iteration scheme (\ref{iter}) converges in some space, we choose the weight $a=2(d+1)$ in (\ref{weighted equation}) and find that the difference $\langle k\rangle^{a}(n_{\ell+2}-n_{\ell+1})$ satisfies
\begin{equation*}
	\partial_t\left[\langle k\rangle^{a}(n_{\ell+2}-n_{\ell+1})\right]=\left(Q_{\ell+1}(t)+\mathcal{R}_{\ell+1}(t)\right)\left(\langle k\rangle^{a}(n_{\ell+2}-n_{\ell+1})\right)
\end{equation*}
\begin{equation*}
	+\left(Q_{\ell+1}(t)+\mathcal{R}_{\ell+1}(t)-Q_{\ell}(t)-\mathcal{R}_{\ell}(t)\right)\left(\langle k\rangle^{a}n_{\ell}\right)
\end{equation*}
\begin{equation}\label{k}
	=\left(Q_{\ell+1}(t)+\mathcal{R}_{\ell+1}(t)\right)\left(\langle k\rangle^{a}(n_{\ell+2}-n_{\ell+1})\right)+\mathcal{F}_{\ell}(t)\left(\langle k\rangle^{a}(n_{\ell+1}-n_{\ell})\right).
\end{equation}

\vspace{7pt}
Here we denote $g:=n_{\ell}$ and the operator $\mathcal{F}_{\ell}(t)$ acting on $H$ is defined as
\begin{equation*}
	\mathcal{F}_{\ell}(t)H:=\iint_{\R^{2d}}	4\pi
	|V_{k k_1 k_2}|^2
	\delta(k-k_1-k_2)
	\delta(\omega-\omega_{1}-\omega_{2})
\end{equation*}
\begin{equation*}
	\times \left\{2\frac{\langle k_2\rangle^{2(d+1)}}{\langle k_1\rangle^{2(d+1)}}g_2\chi_{2\ge1}H_1-\frac{\langle k\rangle^{2(d+1)}}{\langle k_1\rangle^{2(d+1)}}gH_1-\frac{\langle k\rangle^{2(d+1)}}{\langle k_2\rangle^{2(d+1)}}gH_2+2\frac{\langle k\rangle^{2(d+1)}-\langle k_2\rangle^{2(d+1)}}{\langle k_1\rangle^{2(d+1)}}g_2\chi_{2\ge1}H_1\right\}dk_1dk_2,
\end{equation*}

\vspace{4pt}
\begin{equation*}
	+\iint_{\R^{2d}}	4\pi
	|V_{k_1 k k_2}|^2
	\delta(k_1-k-k_2)
	\delta(\omega_{1}-\omega-\omega_{2})
\end{equation*}
\begin{equation*}
	\times \left\{-\frac{\langle k\rangle^{2(d+1)}}{\langle k_2\rangle^{2(d+1)}}gH_2+\frac{\langle k\rangle^{2(d+1)}}{\langle k_1\rangle^{2(d+1)}}gH_1+\frac{\langle k_1\rangle^{2(d+1)}}{\langle k_2\rangle^{2(d+1)}}g_1H_2+\frac{\langle k\rangle^{2(d+1)}-\langle k_1\rangle^{2(d+1)}}{\langle k_2\rangle^{2(d+1)}}g_1H_2\right\}dk_1dk_2,
\end{equation*}

\vspace{4pt}
\begin{equation*}
	+\iint_{\R^{2d}}	4\pi
	|V_{k_2 k k_1}|^2
	\delta(k_2-k-k_1)
	\delta(\omega_{2}-\omega-\omega_{1})
\end{equation*}
\begin{equation*}
	\times \left\{-\frac{\langle k\rangle^{2(d+1)}}{\langle k_1\rangle^{2(d+1)}}gH_1+\frac{\langle k\rangle^{2(d+1)}}{\langle k_2\rangle^{2(d+1)}}gH_2+\frac{\langle k_2\rangle^{2(d+1)}}{\langle k_1\rangle^{2(d+1)}}g_2H_1+\frac{\langle k\rangle^{2(d+1)}-\langle k_2\rangle^{2(d+1)}}{\langle k_1\rangle^{2(d+1)}}g_2H_1\right\}dk_1dk_2
\end{equation*}
\vspace{4pt}
\begin{equation*}
	=:\mathcal{F}_{\ell}^1(t)H+\mathcal{F}_{\ell}^2(t)H+\mathcal{F}_{\ell}^3(t)H.
\end{equation*}

We first establish $L^{p}$-boundedness of the operator $\mathcal{F}_{\ell}(t)$.
\begin{lemma}\label{Fn}
	$\mathcal{F}_{\ell}(t)$ is bounded on $L^{p}(\R^d)$ for all $1\le p\le \infty$, satisfying
	\begin{equation*}
		\|\mathcal{F}_{\ell}(t)\|_{p\to p}\lesssim_d \|n_{\ell}\|_{\infty, 4d+12}.
	\end{equation*}
\end{lemma}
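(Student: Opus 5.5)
The plan is to follow the scheme of Lemma~\ref{boundedness of R}. By Riesz--Thorin interpolation it suffices to treat $p=1$ and $p=\infty$, and we bound each of the twelve terms in $\mathcal{F}^1_\ell,\mathcal{F}^2_\ell,\mathcal{F}^3_\ell$ separately (in absolute value, so the pairing of terms plays no role). Write $a=2(d+1)$. For each term, the $L^\infty$ bound comes from Proposition~\ref{3.14} or Remark~\ref{a}, applied with $F$ equal to the kernel bound times the weight ratio times $g$ or $g_i$, while $H$ is estimated by $\|H\|_\infty$. The $L^1$ bound comes from Fubini: we integrate first in the variable on which $H$ depends, take $\sup$ over that variable, and reduce to a one-variable delta integral. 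That integral is treated as in (\ref{sss})--(\ref{sssss}), using the lower bound on the gradient of the resonance function together with the angular decomposition and $|Z_t|=O_d(1)$.

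The key bookkeeping is that every weight ratio must be absorbed by the factor of $g$ in the integrand, whose decay is $\langle\cdot\rangle^{-(4d+12)}$. In each term, $g$ is evaluated either at the largest frequency or at a frequency comparable to the largest one.
\begin{itemize}
\item In $\mathcal{F}^1_\ell$, $|k|\ge |k_1|,|k_2|$. The ratios $\langle k\rangle^{a}/\langle k_i\rangle^{a}$ multiply $g=g(k)$, so $\langle k\rangle^a g\lesssim \langle k\rangle^{-(2d+10)}\|g\|_{\infty,4d+12}$.
\item In the terms with $g_2\chi_{2\ge1}$, we have $|k_2|\sim|k|$, so $\langle k_2\rangle^a/\langle k_1\rangle^a$ and $(\langle k\rangle^a-\langle k_2\rangle^a)/\langle k_1\rangle^a$ are bounded by $\langle k_2\rangle^a$, which is absorbed by $g_2$.
\item In $\mathcal{F}^2_\ell$, $|k_1|$ is the largest frequency. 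The terms carrying $g_1$ have weights at most $\langle k_1\rangle^a$, absorbed by $g_1$. The terms carrying $g=g(k)$ have ratios $\langle k\rangle^a/\langle k_i\rangle^a$. For $\langle k\rangle^a/\langle k_1\rangle^a\le1$ there is nothing to do. For $\langle k\rangle^a/\langle k_2\rangle^a$, the factor $\langle k\rangle^a$ is absorbed by $g(k)$, and the remaining issue is only the integrability of the kernel.
\item $\mathcal{F}^3_\ell$ is symmetric to $\mathcal{F}^2_\ell$.
\end{itemize}

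After absorbing the weights, the kernel bounds (\ref{R})--(\ref{R_2}) give polynomial factors $|k_{\max}|\min^{7/2}\lesssim \langle k_{\max}\rangle^{9/2}$. These, together with the $\langle\cdot\rangle^{d+1}$ required by Proposition~\ref{3.14}, cost at most $\langle\cdot\rangle^{d+11/2}$ beyond the weight $a$, for a total of $2(d+1)+d+11/2\le 4d+12$ powers. This matches the hypothesis $\|n_\ell\|_{\infty,4d+12}$. When $g$ sits at $k$ but the integration variable that Proposition~\ref{3.14} needs to weight is another $k_i$, we use that $k_{\max}\gtrsim|k_i|$. For the $G^2$ and $G^3$ type integrals we split according to $\chi_{0\ge2}$ versus its complement as in Lemma~\ref{boundedness of R}, since on the complement $|k_1|\sim|k_2|$ and the weight transfers freely between them.

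The main obstacle I expect is the $L^1$ estimate for terms such as $\frac{\langle k\rangle^a}{\langle k_2\rangle^a} g H_2$ in $\mathcal{F}^2_\ell$, where $g$ is evaluated at $k$ but $H$ at $k_2$, and $k$ need not be the largest frequency. After Fubini we must bound $\sup_{k_2}\int \delta(\cdots)\,|V_{k_1kk_2}|^2\langle k\rangle^a g(k)\,dk$. This requires a gradient lower bound for the resonance function in the $k$ variable with $k_2$ fixed, and decay in $k$ strong enough to beat $|k_1|\,\min\{|k|,|k_2|\}^{7/2}$ as $|k|\to\infty$. Here I would first try the bound $|k_1|\lesssim |k|+|k_2|$, splitting into $|k|\ge|k_2|$, where everything is controlled by $\langle k\rangle$ and $g$, and $|k|<|k_2|$, where the minimum is $|k|$ and $|k_1|\sim|k_2|$. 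In the second region, the unbounded factor $|k_2|$ must be killed by the ratio $\langle k_2\rangle^{-a}$, which it is, since $a\ge1$. The gradient bound would mirror (\ref{sssss}), with a possibly degenerate direction handled by Lemmas~\ref{c_11} and~\ref{c_22}.
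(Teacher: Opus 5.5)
Your architecture is the paper's: interpolation to $p=1,\infty$, term-by-term bounds via Proposition~\ref{3.14} and Remark~\ref{a}, absorbing weights into the $g$-factor, and splitting $\mathcal{F}^2_\ell$ by $\chi_{0\ge 2}$. But the bookkeeping principle you rest it on fails for exactly the terms the paper singles out. Consider $\frac{\langle k\rangle^{a}}{\langle k_1\rangle^{a}}\,gH_1$ and $\frac{\langle k\rangle^{a}}{\langle k_2\rangle^{a}}\,gH_2$ in $\mathcal{F}^2_\ell$, and their mirrors in $\mathcal{F}^3_\ell$. On the region $|k_2|>|k|$ one has $|k_1|\sim|k_2|$ while $|k|$ can be arbitrarily smaller, so $g$ is evaluated at the smallest frequency, not at or near the largest.

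This bites in the $L^\infty$ estimate. For fixed $k$, the $k_2$-integration runs over the non-compact surface $\{|k+k_2|^{3/2}=|k|^{3/2}+|k_2|^{3/2}\}$, which is asymptotic to $k^{\perp}$ and on which $|\nabla_{k_2}\Delta\widetilde{\omega}|\sim |k|\,|k_2|^{-1/2}$ for $|k_2|\gg|k|$. So the shell $|k_2|\sim R$ carries $\delta$-mass $\sim R^{d-1}\cdot R^{1/2}/|k|$. Meanwhile (\ref{R_1}) contributes $|k_1|\,|k|^{7/2}\sim R\,|k|^{7/2}$, and the expansion in the proof of Proposition~\ref{collision kernel} shows this growth is real. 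Since $k$ is fixed, $g(k)$ gives no decay in $R$. Consequently:
\begin{itemize}
\item your step ``$\langle k\rangle^a/\langle k_1\rangle^a\le 1$, nothing to do'' produces the divergent sum $\sum_R R^{d+1/2}$;
\item the device ``$k_{\max}\gtrsim|k_i|$'' does not apply, because $k$ is not $k_{\max}$;
\item the analogy with Lemma~\ref{boundedness of R} breaks, because there the complementary region carried $g_2$ at the large frequency;
\item for $\langle k\rangle^a/\langle k_2\rangle^a$, the ``integrability of the kernel'' you defer to is precisely this unverified point.
\end{itemize}

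What is missing is that on this region the decay must come from the weight ratio itself. Using $\langle k_1\rangle^{-a}\sim\langle k_2\rangle^{-a}$, each shell contributes $\sim R^{d+1/2}\langle R\rangle^{-a}\,|k|^{5/2}\langle k\rangle^{a}|g(k)|$, which is summable exactly when $a>d+\frac12$. This is the reason for the choice $a=2(d+1)$. It is the point the paper's proof makes when it notes that $\langle k_2\rangle^{-2(d+1)}$ guarantees the decay of the integrand. Your exponent count $2(d+1)+d+\frac{11}{2}\le 4d+12$ only measures absorption into $g$ and never uses a lower bound on $a$.

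Relatedly, you placed the main obstacle in the $L^1$ estimate. There, for fixed $k_2$, the region $|k|<|k_2|$ is bounded and $a\ge 1$ suffices, as you found; the $L^\infty$ direction is the delicate one. To repair the argument, split both $g(k)$-terms by $\chi_{0\ge 2}$ versus $\chi_{2>0}$. Absorb everything into $g$ on the first piece, and use $\langle k_i\rangle^{-a}$ with $a=2(d+1)$ on the second. With that change your argument coincides with the paper's.
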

\begin{proof}
	It suffices to show for $p=1, \infty$,
	\begin{equation*}
		\|\mathcal{F}_{\ell}^{i}(t)\|_{p\to p}\lesssim_d \|n_{\ell}\|_{\infty, 4d+12}, \quad  \forall i=1,2,3.
	\end{equation*}
	For $\mathcal{F}_{\ell}^{1}(t)$, we observe that every term in the bracket only involves $g, g_2$. Since the constraint 
	\begin{equation*}
		\delta(k-k_1-k_2)
		\delta(\omega-\omega_{1}-\omega_{2}) \chi_{2\ge1}
	\end{equation*}
	ensures 
	\begin{equation*}
		|k|\sim |k_2|\ge |k_1|,
	\end{equation*}
	any factors can be absorbed into $g, g_2$. And the weight index $4d+12$ is chosen so that the integrand has enough decay. Then one can directly check that
	\begin{equation*}
		\|\mathcal{F}_{\ell}^{1}(t)\|_{1\to 1}, \|\mathcal{F}_{\ell}^{1}(t)\|_{\infty\to \infty}\lesssim_d \|n_{\ell}\|_{\infty, 4d+12}.
	\end{equation*}
	
	By symmetry, we only need to prove the boundedness of $\mathcal{F}_{\ell}^{2}(t)$. Note that on the constraint 
	\begin{equation*}
		\delta(k_1-k-k_2)
		\delta(\omega_{1}-\omega-\omega_{2}),
	\end{equation*}
	we have $|k_1|\ge \max\{|k|, |k_2|\}$. Then by the same reasoning, the terms
	\begin{equation*}
		\frac{\langle k_1\rangle^{2(d+1)}}{\langle k_2\rangle^{2(d+1)}}g_1H_2, \quad \frac{\langle k\rangle^{2(d+1)}-\langle k_1\rangle^{2(d+1)}}{\langle k_2\rangle^{2(d+1)}}g_1H_2,
	\end{equation*}
	can be controlled directly. And we are left with the terms
	\begin{equation*}
		-\frac{\langle k\rangle^{2(d+1)}}{\langle k_2\rangle^{2(d+1)}}gH_2, \quad \frac{\langle k\rangle^{2(d+1)}}{\langle k_1\rangle^{2(d+1)}}gH_1.
	\end{equation*}
	For the first term, one can further split it into two parts:
	\begin{equation*}
		-\frac{\langle k\rangle^{2(d+1)}}{\langle k_2\rangle^{2(d+1)}}gH_2=-\frac{\langle k\rangle^{2(d+1)}}{\langle k_2\rangle^{2(d+1)}}gH_2\chi_{0\ge 2}-\frac{\langle k\rangle^{2(d+1)}}{\langle k_2\rangle^{2(d+1)}}gH_2\chi_{2>0}.
	\end{equation*}
	Together with the extra restriction $\chi_{0\ge2}$, we see that $|k_1|\sim |k|$, and any such factors can again be absorbed into $g$. Under the restriction $\chi_{2>0}$, we have $|k_1|\sim |k_2|$. Also note that the weight index $2(d+1)$ is large enough so that 
	\begin{equation*}
		\langle k_2\rangle^{-2(d+1)}
	\end{equation*}
	guarantees the decay of the integrand. Thus, the corresponding integral operator is bounded.
	
	For the second term, we can follow the same argument since $|k_1|$ is always the largest.
	
\end{proof}

Now, we conclude the local existence part of Theorem \ref{main} by a standard contraction mapping argument.

\begin{proof}[Proof of Theorem \ref{main}]
	We first denote the quantity $E:=2\|n_0\|_{\infty,4d+12}$. Recall the equation (\ref{k}), with $a=2(d+1)$:
	\begin{equation*}
		\partial_t\left[\langle k\rangle^{a}(n_{\ell+2}-n_{\ell+1})\right]=\left(Q_{\ell+1}(t)+\mathcal{R}_{\ell+1}(t)\right)\left(\langle k\rangle^{a}(n_{\ell+2}-n_{\ell+1})\right)
	\end{equation*}
	\begin{equation*}
		+\mathcal{F}_{\ell}(t)\left(\langle k\rangle^{a}(n_{\ell+1}-n_{\ell})\right).
	\end{equation*}
	
	Then applying Duhamel's formula, the uniform bound (\ref{l}) and the energy estimate (\ref{j}), one can obtain
	\begin{equation*}
		\|n_{\ell+2}(t)-n_{\ell+1}(t)\|_{\infty,2(d+1)}\le \int_{0}^t\left\|U_\ell(t,s) \mathcal{F}_{\ell}(s) \langle k\rangle^{2(d+1)}\left(n_{\ell+1}-n_{\ell}\right)\right\|_2 ds
	\end{equation*}
	\begin{equation*}
		\lesssim \int_{0}^t \exp\left(C_a(t-s)E\right)E \|n_{\ell+1}(s)-n_{\ell}(s)\|_{\infty,2(d+1)}ds
	\end{equation*}
	\begin{equation*}
		\lesssim \left(e^{C_a tE}-1\right)\sup_{s\in[0,t]}\|n_{\ell+1}(s)-n_{\ell}(s)\|_{\infty,2(d+1)}.
	\end{equation*}
	Thus, by taking $T=T(E)>0$ sufficiently small, we verify the contraction
	\begin{equation*}
		\sup_{t\in[0,T]}\|n_{\ell+2}(t)-n_{\ell+1}(t)\|_{\infty,2(d+1)}\le \frac{1}{2}\sup_{t\in[0,T]}\|n_{\ell+1}(t)-n_{\ell}(t)\|_{\infty,2(d+1)}.
	\end{equation*}
	Then the sequence $\{n_{\ell}\}_\ell$ converges in the space $C([0,T];L_{2(d+1)}^\infty(\R^d))$ and its limit $n(t,k)$ is the strong solution to the capillary wave kinetic equation (\ref{Capillary}).
	
	By choosing a pointwise convergent subsequence, one can also ensure that
	\begin{equation*}
		\sup_{t\in [0,T]}\|n(t,k)\|_{\infty,4d+12}\le 2\|n_0\|_{\infty,4d+12}.
	\end{equation*}
\end{proof}

       \newpage
       \appendix
       \section{Auxiliary estimates}
      In Appendix A, we provide detailed proofs of some estimates that appeared in the above sections.
       
      \begin{lemma}\label{c_11}
      	Suppose that 
      	\begin{equation}\label{b}
      		k=k_1+k_2,\quad |k|^{3/2}=|k_1|^{3/2}+|k_2|^{3/2},
      	\end{equation}
      	then we have 
      	\begin{equation}\label{v}
      		\bigg|\frac{k_1}{|k_1|^{1/2}}-\frac{k_2}{|k_2|^{1/2}}\bigg|\ge \frac{|k_1-k_2|}{|k_1|^{1/2}+|k_2|^{1/2}},
      	\end{equation}
      	
      	\vspace{5pt}
      	\begin{equation}\label{vv}
      		|k_1-k_2|\ge \sqrt{2^{2/3}-1}|k_1+k_2|=\sqrt{2^{2/3}-1}|k|.
      	\end{equation}
      \end{lemma}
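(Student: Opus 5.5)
The inequality \eqref{v} does not use the resonance relation: it holds for any two nonzero vectors. The inequality \eqref{vv} follows from the parallelogram law together with a power-mean (Jensen) inequality applied to the resonance condition \eqref{b}. If $k_1=0$ or $k_2=0$, both statements are trivial or can be taken as limits, so assume $k_1,k_2\neq 0$.

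\textbf{Step 1: proof of \eqref{v}.} Write $x:=|k_1|^{1/2}$, $y:=|k_2|^{1/2}$ and $c:=\frac{k_1\cdot k_2}{|k_1||k_2|}\in[-1,1]$. Then
\[
\Big|\tfrac{k_1}{|k_1|^{1/2}}-\tfrac{k_2}{|k_2|^{1/2}}\Big|^2=x^2+y^2-2cxy,\qquad |k_1-k_2|^2=x^4+y^4-2cx^2y^2 .
\]
Squaring \eqref{v}, it suffices to show
\[
(x+y)^2(x^2+y^2-2cxy)\ge x^4+y^4-2cx^2y^2 .
\]
Expanding the left-hand side gives $x^4+y^4+2x^2y^2+2xy(x^2+y^2)-2cxy(x+y)^2$. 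Subtracting the right-hand side, the difference equals
\[
2xy\big[(x^2+y^2+xy)-c(x^2+y^2+xy)\big]=2xy(1-c)(x^2+xy+y^2).
\]
This is nonnegative since $c\le1$. Hence \eqref{v} holds.

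\textbf{Step 2: proof of \eqref{vv}.} By the parallelogram law,
\[
|k_1-k_2|^2=2|k_1|^2+2|k_2|^2-|k|^2 .
\]
So \eqref{vv} is equivalent to
\[
|k_1|^2+|k_2|^2\ge 2^{-1/3}|k|^2 .
\]
By homogeneity, normalize $|k|=1$. Set $A:=|k_1|^{3/2}$ and $B:=|k_2|^{3/2}$, so that \eqref{b} gives $A+B=1$ with $A,B\ge0$. The claim becomes
\[
A^{4/3}+B^{4/3}\ge 2^{-1/3}.
\]
The map $t\mapsto t^{4/3}$ is convex, so Jensen's inequality gives
\[
A^{4/3}+B^{4/3}\ge 2\Big(\tfrac{A+B}{2}\Big)^{4/3}=2\cdot 2^{-4/3}=2^{-1/3}.
\]
Taking square roots yields \eqref{vv}.

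Once the substitutions are made, neither step has a real obstacle. The only point needing care is the algebra in Step 1: the difference must factor as $2xy(1-c)(x^2+xy+y^2)$, and I would double-check that expansion. In Step 2, the key choice is to rewrite the resonance condition in the variables $A,B$ so that Jensen applies directly.
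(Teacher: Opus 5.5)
Your proposal is correct and follows the paper's proof essentially step for step. For \eqref{v}, the paper also squares both sides in terms of $|k_1|,|k_2|$ and the cosine $\gamma$, and your factorization $2xy(1-c)(x^2+xy+y^2)$ is the explicit form of its observation that the difference is linear and nonincreasing in $\gamma$ and vanishes at $\gamma=1$. For \eqref{vv}, the paper uses the parallelogram law together with the power mean inequality between exponents $3/2$ and $2$, which is exactly your Jensen step for $t\mapsto t^{4/3}$.
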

      \begin{proof}
      	Let $a:=|k_1|$, $b:=|k_2|$, and let the angle be
      	\begin{equation*}
      		\gamma:=\widehat{k_1}\cdot \widehat{k_2}\in [-1,1], \quad \widehat{k_1}:=\frac{k_1}{|k_1|},\; \widehat{k_2}:=\frac{k_2}{|k_2|}.
      	\end{equation*}
      	
      	Then we can write
      	\begin{equation*}
      		\bigg|\frac{k_1}{|k_1|^{1/2}}-\frac{k_2}{|k_2|^{1/2}}\bigg|^2=a+b-2\sqrt{ab}\gamma,
      	\end{equation*}
      	\vspace{5pt}
      	\begin{equation*}
      		\frac{|k_1-k_2|^2}{\left(|k_1|^{1/2}+|k_2|^{1/2}\right)^{2}}=\frac{a^2+b^2-2ab\gamma}{(\sqrt{a}+\sqrt{b})^2}.
      	\end{equation*}
      	
      	Note that the coefficients of $\gamma$ satisfy
      	\begin{equation*}
      		-2\sqrt{ab}\le \frac{-2ab}{(\sqrt{a}+\sqrt{b})^2}.
      	\end{equation*}
      	We then check the endpoint $\gamma=1$ and obtain inequality (\ref{v}).
      	
      	For (\ref{vv}), we combine (\ref{b}) with the parallelogram law
      	\begin{equation*}
      		|k_1-k_2|^2=2\left(|k_1|^2+|k_2|^2\right)-|k_1+k_2|^2=2\left(|k_1|^2+|k_2|^2\right)-|k|^2
      	\end{equation*}
      	\begin{equation*}
      		=2(a^2+b^2)-\left(a^{3/2}+b^{3/2}\right)^{4/3}.
      	\end{equation*}
      	Recall the power mean inequality:
      	\begin{equation*}
      		\left(\frac{a^2+b^2}{2}\right)^{1/2}\ge \left(\frac{a^{3/2}+b^{3/2}}{2}\right)^{2/3},
      	\end{equation*}
      	from which we obtain 
      	\begin{equation*}
      		|k_1-k_2|^2\ge (2^{2/3}-1)\left(a^{3/2}+b^{3/2}\right)^{4/3}=(2^{2/3}-1)|k|^2.
      	\end{equation*}
      \end{proof}
      \begin{rem}
      	From the proof, we see that estimate (\ref{v}) holds even without the resonant manifold constraint (\ref{b}). Thus, it can be applied under different restrictions.
      \end{rem}
       
      \begin{lemma}\label{c_22}
      	Define the zero set
      	\[
      	Z_t
      	=
      	\left\{
      	s\in\R:
      	\Delta\omega(k,k_2)\big|_{k_2=se_j+t}=0
      	\right\},
      	\]
      \end{lemma}
      where $\Delta\omega(k,k_2)=|k|^{3/2}-|k-k_2|^{3/2}-|k_2|^{3/2}$. Then we have
      \begin{equation*}
      	|Z_t|=O_d(1).
      \end{equation*}

       \begin{proof}
       	Let
       	\[
       	F(s):=|k|^{3/2}-|k-se_j-t|^{3/2}-|se_j+t|^{3/2},
       	\]
       	so that
       	\[
       	Z_t=\{s\in\mathbb{R}:F(s)=0\}.
       	\]
       	Since
       	\[
       	D^2|x|^{3/2}
       	=
       	\frac{3}{2}|x|^{-1/2}
       	\left(
       	I-\frac12\frac{x\otimes x}{|x|^2}
       	\right)
       	\]
       	is positive definite for every \(x\neq0\), we have
       	\[
       	F''(s)
       	=
       	-e_j^TD^2|k-k_2|^{3/2}e_j
       	-
       	e_j^TD^2|k_2|^{3/2}e_j
       	<0,
       	\]
       	whenever \(k_2\neq0,k\). Thus \(F\) is strictly concave on each connected component of
       	\[
       	\mathbb{R}\setminus\{s:k_2=0\text{ or }k_2=k\}.
       	\]
       	Since a strictly concave function has at most two zeros on each interval, and the line
       	\(\{se_j+t:s\in\mathbb{R}\}\)
       	meets the singular sets \(\{0\}\) and \(\{k\}\) at most once each, it follows that
       	\begin{equation*}
       		|Z_t|=O_d(1),
       	\end{equation*}
       	which is also uniform in $t,k$.
       \end{proof}

		\vspace{15pt}
		\section{Justification of the propagator}
		
		In Remark~\ref{approx}, we explained that the propagator associated with
		\[
		Q_g(t)+\mathcal{R}_g(t)
		\]
		should be understood as the strong limit of a family of regularized propagators. Since both \(Q_{g,b}\) and \(\mathcal{R}_g\) are bounded operators, it suffices to consider the dissipative part \(Q_{g,D}\) and rigorously construct its propagator \(U_{g,D}(t,s)\). We now carry out this construction.
		
		The proof consists of the following four steps:
		\begin{itemize}
			\item Construct a family of regularized operators
			\[
			\{Q_{g,D,\varepsilon}(t)\}_{\varepsilon>0}.
			\]
			
			\item Prove that each \(Q_{g,D,\varepsilon}(t)\) is bounded on \(L^p\), and hence generates a unique propagator \(U_{g,D,\varepsilon}(t,s)\).
			
			\item Establish a uniform bound for the operator norms
			\[
			\|Q_{g,D,\varepsilon}(t)\|_{L^p_1\to L^p},
			\]
			independent of \(\varepsilon\).
			
			\item Show that the family \(\{U_{g,D,\varepsilon}(t,s)\}_{\varepsilon>0}\) converges strongly, and define its limit to be the propagator \(U_{g,D}(t,s)\).
		\end{itemize}
		
		\vspace{7pt}
		We consider the following regularized operators acting on $h$:
		\begin{equation*}
			Q_{g,D, \varepsilon}(t)h:=\iint_{\R^{2d}} e^{-\varepsilon(|k_1|+|k_2|)}4\pi
			|V_{k k_1 k_2}|^2
			\delta(k-k_1-k_2)
			\delta(\omega-\omega_{1}-\omega_{2}) \left[2g_1h_2 \chi_{2\ge 1}-g_1h-g_2h\right]dk_1dk_2
		\end{equation*}
		\begin{equation*}
			-\iint_{\R^{2d}} e^{-\varepsilon(|k|+|k_2|)}4\pi
			|V_{k_1 k k_2}|^2
			\delta(k_1-k-k_2)
			\delta(\omega_{1}-\omega-\omega_{2}) \left[g_2h-g_2h_1\right]dk_1dk_2
		\end{equation*}
		
		\begin{equation*}
			-\iint_{\R^{2d}} e^{-\varepsilon(|k|+|k_1|)}4\pi
			|V_{k_2 k k_1}|^2
			\delta(k_2-k-k_1)
			\delta(\omega_{2}-\omega-\omega_{1}) \left[g_1h-g_1h_2\right]dk_1dk_2.
		\end{equation*}
		\begin{equation*}
			=: Q_{g,D, \varepsilon}^1(t)h+Q_{g,D, \varepsilon}^2(t)+Q_{g,D, \varepsilon}^3(t).
		\end{equation*}

		\vspace{7pt}
		We first show that $Q_{g,D, \varepsilon}(t):L^p(\R^d)\to L^{p}(\R^d)$ is bounded.
		\begin{lemma}\label{new bounded}
		   For $1\le p\le \infty$, $Q_{g,D, \varepsilon}$ is bounded on $L^p(\R^d)$ and satisfies
		   \begin{equation*}
		   	\|Q_{g,D,\varepsilon}\|_{p\to p}\lesssim_{\varepsilon} \|g\|_{\infty,2d+10}.
		   \end{equation*}
		\end{lemma}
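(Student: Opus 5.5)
The regularized operator $Q_{g,D,\varepsilon}(t)$ splits into the three pieces $Q^1_{g,D,\varepsilon}$, $Q^2_{g,D,\varepsilon}$, $Q^3_{g,D,\varepsilon}$. Each piece is in turn a sum of a multiplication operator (the terms carrying $h=h(k)$) and an integral operator (the terms carrying $h_1$ or $h_2$). By Riesz--Thorin interpolation it suffices to prove $L^1$ and $L^\infty$ bounds. For the multiplication parts, only the $L^\infty$ norm of the multiplier is needed, and that gives boundedness on every $L^p$ at once. The role of the factor $e^{-\varepsilon(\cdot)}$ is to absorb the polynomial growth in $|k|$ that made $Q_{g,D}$ unbounded: the elementary inequality $|x|^m e^{-\varepsilon|x|}\lesssim_{m,\varepsilon}1$ will be used repeatedly.

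\textbf{Multiplication parts.} In $Q^1_{g,D,\varepsilon}$ the multipliers are
\[
\iint e^{-\varepsilon(|k_1|+|k_2|)}|V_{kk_1k_2}|^2\delta\delta\,(g_1+g_2)\,dk_1dk_2 .
\]
By (\ref{R}), $|V_{kk_1k_2}|^2\lesssim |k|\min\{|k_1|,|k_2|\}^{7/2}$. On the resonant set $|k|\le |k_1|+|k_2|$, so the factor $|k|$ is absorbed by $e^{-\varepsilon(|k_1|+|k_2|)}$ at the cost of a constant $C_\varepsilon$. The remaining factor $\min\{|k_1|,|k_2|\}^{7/2}g_i$ is then controlled by (\ref{3.1411}), giving a bound $\lesssim_\varepsilon \|g\|_{\infty,d+1+7/2}\le\|g\|_{\infty,2d+10}$. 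The mixed combination $\min\{\cdot\}^{d+1}$ times $g_i$ is handled by noting that both $\langle k_1\rangle$ and $\langle k_2\rangle$ are $\lesssim\langle k_i\rangle$ up to the exponential factor.

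In $Q^2_{g,D,\varepsilon}$ the multiplier is $\iint e^{-\varepsilon(|k|+|k_2|)}|V_{k_1kk_2}|^2\delta\delta\, g_2$. Using (\ref{R_1}), the dangerous factor is $|k_1|\le|k|+|k_2|$, which is again absorbed by the exponential. Then (\ref{3.14311}), with roles relabeled as in the proof of Proposition~\ref{3.14}, gives control by $\|\langle k_2\rangle^{d+1+7/2}g\|_\infty$. Alternatively, one can bound $|V|^2e^{-\varepsilon(\cdot)}\lesssim_\varepsilon \min\{|k|,|k_2|\}^{7/2}$ and apply (\ref{3.14211}) after using that $|k_1|\ge|k_2|$ is comparable to the largest frequency. $Q^3_{g,D,\varepsilon}$ is handled symmetrically.

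\textbf{Integral parts.} These are the terms $2g_1h_2\chi_{2\ge1}$, $g_2h_1$ and $g_1h_2$. For $p=\infty$, the bound follows exactly as above, pulling $\|h\|_\infty$ out. For $p=1$, I would argue as in (\ref{sss})--(\ref{sssss}) in the proof of Lemma~\ref{boundedness of R}: apply Fubini, take the supremum over the variable carrying $h$, and integrate out $k$ using the delta function. This leaves a single resonant-surface integral whose gradient lower bound is given by (\ref{sssss}) or its analogues, and the same angular decomposition applies.

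\textbf{Main obstacle.} The delicate point is the $L^1$ bound for the integral terms in which $h$ sits on the largest frequency, for instance $g_2h_1$ in $Q^2$ with $|k_1|\ge|k|$. Here, after fixing $k_1$, the exponential weights $e^{-\varepsilon(|k|+|k_2|)}$ decay in the integration variables $k, k_2$ but not in $k_1$. So the prefactor $|k_1|\lesssim|k|+|k_2|$ must be absorbed by these weights, and the gradient bound must be kept uniform in $k_1$. I would first try to show that $|\nabla_{k}\Delta\omega|\gtrsim |k_2|^{1/2}$ or $|k|^{1/2}$ via Lemma~\ref{c_11}, and that the zero-count bound of Lemma~\ref{c_22} applies uniformly in $k_1$. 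If this holds, decay in the transverse variable comes from $e^{-\varepsilon|k|}$ together with $\langle k_2\rangle^{-(2d+10)}$ from $g_2$, which closes the estimate with a constant depending only on $\varepsilon$ and $d$.
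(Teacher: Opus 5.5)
Your proposal is correct and is essentially the paper's argument. The paper only observes that the exponential factors absorb every polynomial weight and then invokes Proposition~\ref{3.14}, and your $L^1$ step via Fubini is the same device as (\ref{sss})--(\ref{sssss}). One correction to your ``main obstacle'': for fixed $k_1$ the constraint $k_1=k+k_2$, $|k_1|^{3/2}=|k|^{3/2}+|k_2|^{3/2}$ is exactly the compact resonant set of (\ref{3.141}) with $k_1$ as the sum. So Lemma~\ref{c_11} gives $|\nabla_{k_2}\Delta\omega|\gtrsim|k_1|^{1/2}$ uniformly, and this case is benign. The genuinely degenerate geometry is in the $L^\infty$ bounds for $Q^2_{g,D,\varepsilon}$ and $Q^3_{g,D,\varepsilon}$, where the gradient is only $\sim|k|/|k_2|^{1/2}$ when $|k|\ll|k_2|$; it is compensated precisely by the factor $\min\{|k|,|k_2|\}^{7/2}$ that you kept.
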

		\begin{proof}
			The proof is relatively trivial, since the factors $e^{-\varepsilon(|k_i|+|k_j|)}$ can absorb any polynomial-type factors. Then we can apply Proposition \ref{3.14} to obtain the boundedness.
			
		\end{proof}
		\begin{rem}\label{new dissi}
		From the above boundedness, we know that the regularized operators $Q_{g,D, \varepsilon}$ can generate a unique semigroup $U_{g,D,\varepsilon}(t,s)$. Also note that, following the same proof as in Lemma \ref{dissipative lemma}, the regularized operators $Q_{g,D, \varepsilon}$ are actually dissipative, which implies that $U_{g,D,\varepsilon}(t,s)$ is a contraction, i.e.,
			\begin{equation}\label{contraction}
				\|U_{g,D,\varepsilon}(t,s)h\|_p\le \|h\|_p, \quad \forall h\in L^p(\R^d).
			\end{equation}		
		\end{rem}
		
		\vspace{7pt}
		\begin{lemma}\label{uniform operator}
			For $1\le p\le \infty$, $Q_{g,D,\varepsilon}(t): L_1^{p}(\R^d)\to L^p(\R^d)$ is bounded and satisfies
			\begin{equation*}
			    \|Q_{g,D,\varepsilon}\|_{L_1^p\to L^p}\lesssim \|g\|_{\infty,2d+10},
			\end{equation*}
			where the implicit constant is independent of $\varepsilon$.
		\end{lemma}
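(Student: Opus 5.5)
Since $0<e^{-\varepsilon(\cdot)}\le 1$ multiplies every term, we drop the regularizing factors at once. The claim then reduces to showing that the unregularized dissipative operator $Q_{g,D}$ maps $L^p_1$ into $L^p$ with norm $\lesssim \|g\|_{\infty,2d+10}$. Each term of $Q_{g,D,\varepsilon}$ is dominated pointwise by the same term of $Q_{g,D}$ with absolute values, so any such bound is automatically uniform in $\varepsilon$. As in Lemma~\ref{boundedness of R}, we prove the estimate for $p=\infty$ and $p=1$ and interpolate; the weighted spaces $L^p_1$ interpolate in the usual way, or we can equivalently bound the operator $h\mapsto Q_{g,D}(\langle k\rangle^{-1}h)$ on $L^p$. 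The point of the weight $\langle k\rangle^{1}$ is that $Q_{g,D}$ itself is \emph{not} bounded on $L^p$. The gain terms $2g_1h_2\chi_{2\ge1}$ and the loss terms $-g_1h-g_2h$ in the first integral, and the pairs $g_2h - g_2h_1$ and $g_1h-g_1h_2$ in the other two, individually carry the full kernel $|V|^2\lesssim |k|\min\{\cdot\}^{7/2}$. The factor $|k|$ (respectively $|k_1|$, $|k_2|$) is then one power of the largest frequency too many to absorb into $g$.

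\textbf{Step 1 (multiplier terms).} The terms $-g_1h-g_2h$ in the first integral, $-g_2h$ in the second and $-g_1h$ in the third act as multiplication of $h(k)$ by a function $\mu(k)$. By (\ref{R}), (\ref{R_1}), (\ref{R_2}) and Proposition~\ref{3.14} together with Remark~\ref{a}, each piece is bounded by
\[
|\mu(k)|\lesssim \langle k\rangle\,\|g\|_{\infty,2d+10}.
\]
In the first integral the $g$-variable carries $\min\{|k_1|,|k_2|\}^{7/2}$, and when the $g$-variable is the larger one, $|k|\sim$ it, so the weight of $g$ absorbs everything except one power of $\langle k\rangle$. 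In the second and third integrals we split according to $\chi_{0\ge2}$ versus $\chi_{2>0}$, exactly as for $\mathcal{R}^{21},\mathcal{R}^{22}$, and the leftover factor is $|k_1|\lesssim\langle k\rangle+\langle k_2\rangle$, whose second part is absorbed into $g_2$. Consequently $\|\mu h\|_p\lesssim \|g\|_{\infty,2d+10}\|h\|_{p,1}$ for every $p$.

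\textbf{Step 2 (integral terms).} The terms $2g_1h_2\chi_{2\ge1}$, $g_2h_1$ and $g_1h_2$ are integral operators. For $p=\infty$, we bound $|h_j|\le \langle k_j\rangle^{-1}\|h\|_{\infty,1}$ and repeat the argument of Lemma~\ref{boundedness of Qb} and of the $\mathcal{R}$ estimates. Now the extra decay $\langle k_j\rangle^{-1}$ cancels the large prefactor. In $2g_1h_2\chi_{2\ge1}$ we have $|k|\sim|k_2|$, so
\[
|k|\,|k_1|^{7/2}\,\langle k_2\rangle^{-1}\lesssim |k_1|^{7/2},
\]
and we conclude by (\ref{3.1411}). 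For $g_2h_1$ in the second integral the prefactor is $|k_1|$, matched by $\langle k_1\rangle^{-1}$, and the remaining $\min\{|k|,|k_2|\}^{7/2}\le |k_2|^{7/2}$ goes into $g_2$. For $p=1$ we use duality as in (\ref{sss}): we integrate over $k$ first for fixed $k_j$ and apply the gradient lower bound (\ref{sssss}) or (\ref{6.27}) together with the coarea and angular-sector argument of Proposition~\ref{3.14}.

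\textbf{Main obstacle.} The delicate case is the $p=1$ bound for the integral terms in the second and third integrals. After fixing $k_1$ (the largest frequency) and integrating in $k$, the relevant gradient is $\nabla\Delta\omega$ in the remaining free variable. It is bounded below only by $\gtrsim |k_1|^{1/2}$, or by the larger of the two small frequencies, and we must check that the resulting Jacobian does not lose more than the weight budget $2d+10$ allows. The plan is to split into the regions $|k|\ge|k_2|$ and $|k|<|k_2|$. In each region we integrate out the variable on which $g$ does not depend, put the full $\langle\cdot\rangle^{d+1}$ decay required by Proposition~\ref{3.14} onto $g$, and verify that the total power $\frac72+1+(d+1)\le 2d+10$ suffices.
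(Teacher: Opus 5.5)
Your overall route is the paper's. You discard the factors $e^{-\varepsilon(\cdot)}\le 1$, study $h\mapsto Q_{g,D}(\langle\cdot\rangle^{-1}h)$ on $L^p$, interpolate between $p=1$ and $p=\infty$, let the extra $\langle\cdot\rangle^{-1}$ cancel the largest-frequency factor of $|V|^2$, and split the second and third integrals by $\chi_{0\ge2}$ versus $\chi_{2>0}$.

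One step, however, fails as written. In Step~2 at $p=\infty$, for $g_2h_1$ in the second integral (and symmetrically $g_1h_2$ in the third), you replace $\min\{|k|,|k_2|\}^{7/2}$ by $|k_2|^{7/2}$ and then need a bound uniform in $k$. On $\{|k_2|>|k|\}$ the zero set of $\Delta\widetilde{\omega}(k,k_2)=|k+k_2|^{3/2}-|k|^{3/2}-|k_2|^{3/2}$ is unbounded: for $|k_2|=R\gg|k|$ it sits at $\cos\angle(k,k_2)\approx\frac23(|k|/R)^{1/2}$. There $|\nabla_{k_2}\Delta\widetilde{\omega}|\sim|k|\,|k_2|^{-1/2}$, not $\gtrsim|k|^{1/2}$. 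Hence, if $g\ge c>0$ on the unit ball,
\[
\int_{|k_2|>|k|}\delta\big(\Delta\widetilde{\omega}(k,k_2)\big)\,|k_2|^{7/2}g(k_2)\,dk_2\;\gtrsim\;\frac{c}{|k|}\int_{C|k|}^{1}R^{\,d+2}\,dR\;\gtrsim\;\frac{c}{|k|}\qquad (k\to 0),
\]
so once the minimum is discarded the $L^\infty$ bound cannot close.

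For the same reason, \eqref{3.14211}--\eqref{3.14311} in Remark~\ref{a} cannot be quoted as stated. The proof of Proposition~\ref{3.14} uses $|\nabla\Delta\widetilde{\omega}|\gtrsim|k|^{1/2}$, which needs $|k_2|\le|k|$ (resp.\ $|k_1|\le|k|$). Indeed $F=\langle k_1\rangle^{-(d+1)}$ violates \eqref{3.14211} as $k\to 0$. This also affects the $\chi_{2>0}$ part of your Step~1 multipliers. There $|\mu(k)|\lesssim\langle k\rangle\|g\|_{\infty,2d+10}$ is true, but only because of the factor $|k|^{7/2}$, not because of Remark~\ref{a}. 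The paper's ``controlled similarly'' rests on the same remark and passes over this point.

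The repair is to keep $\min\{|k|,|k_2|\}^{7/2}=|k|^{7/2}$ on $\{|k_2|>|k|\}$. By \eqref{v}, which holds off the resonant manifold, there $|\nabla_{k_2}\Delta\widetilde{\omega}|\ge\frac32|k|/(|k_1|^{1/2}+|k_2|^{1/2})\gtrsim|k|\,|k_2|^{-1/2}$. So the Jacobian costs $|k|^{-1}|k_2|^{1/2}$, and the integrand becomes $\lesssim|k|^{5/2}|k_2|^{1/2}g_2$ after using $|k_1|\langle k_1\rangle^{-1}\le 1$. Since $|k|<|k_2|$, the weight $\langle k_2\rangle^{2d+10}$ on $g$ absorbs everything, and the coarea/angular-sector argument gives a bound uniform in $k$. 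Note that $\Delta\widetilde{\omega}$ is not concave along lines, so the root count $|Z_t|=O(1)$ needs a justification other than Lemma~\ref{c_22}.

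Finally, your ``main obstacle'' is misplaced. At $p=1$, fixing the largest frequency $k_1$ puts you exactly in the geometry of $G^1$: a compact resonant set with gradient $\gtrsim|k_1|^{1/2}$ by \eqref{v}--\eqref{vv}. Then $\min\{\langle k\rangle,\langle k_2\rangle\}^{d+1}\min\{|k|,|k_2|\}^{7/2}g_2\le\langle k_2\rangle^{d+9/2}g_2$ closes it at once through \eqref{3.1411}. The genuinely delicate case is $p=\infty$ on the region where the free small frequency exceeds $|k|$.
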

		\begin{proof}
			By interpolation, we only need to check the cases $p=1,\infty$. It also suffices to show
			\begin{equation*}
				\|Q_{g,D,\varepsilon}^i\|_{L_1^p\to L^p}\lesssim \|g\|_{\infty,2d+10}, \quad \forall i=1,2,3.
			\end{equation*}
			By definition, the above boundedness can be reduced to 
			\begin{equation*}
				\|\widetilde{Q}_{g,D,\varepsilon}^i\|_{L^p\to L^p}\lesssim \|g\|_{\infty,2d+10}, \quad \forall i=1,2,3,
			\end{equation*}
			where $\widetilde{Q}_{g,D,\varepsilon}^i$ are given by 
			\begin{equation*}
				\widetilde{Q}_{g,D,\varepsilon}^1h:=\iint_{\R^{2d}} e^{-\varepsilon(|k_1|+|k_2|)}4\pi
				|V_{k k_1 k_2}|^2
				\delta(k-k_1-k_2)
				\delta(\omega-\omega_{1}-\omega_{2}) 
			\end{equation*}
			\begin{equation*}
				\times \left[2g_1 \frac{h_2}{\langle 
					k_2\rangle} \chi_{2\ge 1}-g_1\frac{h}{\langle 
					k\rangle}-g_2\frac{h}{\langle 
					k\rangle}\right]dk_1dk_2,
			\end{equation*}
			\begin{equation*}
				\widetilde{Q}_{g,D,\varepsilon}^2h:=-\iint_{\R^{2d}} e^{-\varepsilon(|k|+|k_2|)}4\pi
				|V_{k_1 k k_2}|^2
				\delta(k_1-k-k_2)
				\delta(\omega_{1}-\omega-\omega_{2}) 
			\end{equation*}
			\begin{equation*}
				\times \left[g_2\frac{h}{\langle 
					k\rangle}-g_2\frac{h_1}{\langle 
					k_1\rangle}\right]dk_1dk_2,
			\end{equation*}
			
			\begin{equation*}
				\widetilde{Q}_{g,D,\varepsilon}^3h:=-\iint_{\R^{2d}} e^{-\varepsilon(|k|+|k_1|)}4\pi
				|V_{k_2 k k_1}|^2
				\delta(k_2-k-k_1)
				\delta(\omega_{2}-\omega-\omega_{1}) 
			\end{equation*}
			\begin{equation*}
				\times \left[g_1\frac{h}{\langle 
					k\rangle}-g_1\frac{h_2}{\langle 
					k_2\rangle}\right]dk_1dk_2.
			\end{equation*}
		For $\widetilde{Q}_{g,D,\varepsilon}^1h$, the extra $\langle 
		k\rangle^{-1}$ and $\langle 
		k_2\rangle^{-1}$ can cancel out $|k|$ from the collision kernel $|V_{kk_1k_2}|^2$. We can then follow the same procedure as in Lemma \ref{boundedness of R} to derive
		\begin{equation*}
			\|\widetilde{Q}_{g,D,\varepsilon}^1\|_{L^p\to L^p}\lesssim \|g\|_{\infty,2d+10}.
		\end{equation*}
		For $\widetilde{Q}_{g,D,\varepsilon}^2h$, the second term $-g_2\frac{h_1}{\langle k_1\rangle}$ can be controlled similarly. We decompose the first term as follows:
		\begin{equation*}
			g_2\frac{h}{\langle k\rangle}=g_2\frac{h}{\langle k\rangle}\chi_{0\ge 2}+g_2\frac{h}{\langle k\rangle}\chi_{2>0}.
		\end{equation*}
		Under the restriction $\chi_{0\ge 2}$, we know $|k|\sim |k_1|$, and $\langle k\rangle^{-1}$ can cancel out $|k_1|$ in $|V_{k_1kk_2}|^2$. Under the restriction $\chi_{2>0}$, one has $|k_2|\sim |k_1|$, which enables $g_2$ to absorb any factors.
			
			Analogously, we can obtain the $L^p$-boundedness of $\widetilde{Q}_{g,D,\varepsilon}^3$.
			
		\end{proof}
	
Finally, we verify the strong convergence of $\{U_{g,D,\varepsilon}(t,s)\}_{\varepsilon>0}$. We refer readers to \cite[Lemma A.2]{pan2026local} for a similar discussion.

\begin{lemma}\label{final}
	For $1\le p\le \infty$, the strong limit
	\begin{equation*}
		\lim_{\varepsilon\to 0}U_{g,D,\varepsilon}(t,s)
	\end{equation*}
	exists in $L^p(\R^d)$, for all $0\le s\le t\le T$. Equivalently, for any $h\in L^{p}(\R^d)$, $\{U_{g,D,\varepsilon}(t,s)h\}_{\varepsilon>0}$ is a Cauchy sequence in $L^p(\R^d)$.
\end{lemma}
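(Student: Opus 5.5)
The plan is to compare two regularized propagators through a Duhamel-type identity, to get a quantitative Cauchy estimate on weighted data, and then to pass to general $h$. For $1\le p<\infty$ the last step is a density argument. For $p=\infty$, where density fails, I would try to exploit the dissipation at high frequency, and I am not sure that step works as stated.

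\textbf{Step 1 (two-parameter Duhamel identity).} For $0<\varepsilon'<\varepsilon$, both $Q_{g,D,\varepsilon}$ and $Q_{g,D,\varepsilon'}$ are bounded on $L^p$ by Lemma~\ref{new bounded}, so the propagators are differentiable in norm. Differentiating $\tau\mapsto U_{g,D,\varepsilon}(t,\tau)U_{g,D,\varepsilon'}(\tau,s)h$ gives
\begin{equation*}
U_{g,D,\varepsilon}(t,s)h-U_{g,D,\varepsilon'}(t,s)h=\int_s^t U_{g,D,\varepsilon}(t,\tau)\bigl(Q_{g,D,\varepsilon}(\tau)-Q_{g,D,\varepsilon'}(\tau)\bigr)U_{g,D,\varepsilon'}(\tau,s)h\,d\tau .
\end{equation*}
By the contraction property (\ref{contraction}), the outer propagator costs nothing.

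\textbf{Step 2 (smallness of the operator difference).} I would show
\begin{equation*}
\|Q_{g,D,\varepsilon}-Q_{g,D,\varepsilon'}\|_{L^p_2\to L^p}\lesssim \varepsilon\,\|g\|_{\infty,2d+10}.
\end{equation*}
The cutoff factors satisfy $|e^{-\varepsilon x}-e^{-\varepsilon' x}|\le \varepsilon x$ with $x=|k_i|+|k_j|$. On each resonant manifold, $x$ is bounded by a multiple of the largest frequency, and that frequency is carried either by $g$ (absorbed into its weight) or by $h$. The terms carried by $h$ need one extra $\langle\cdot\rangle$ beyond the single weight already used in Lemma~\ref{uniform operator}, which is why the domain is $L^p_2$. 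The estimate then follows by repeating the term-by-term case split of Lemma~\ref{uniform operator} with $\chi_{0\ge 2}$ and $\chi_{2>0}$, together with Proposition~\ref{3.14}.

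\textbf{Step 3 (weighted bound, uniform in $\varepsilon$).} Next I need
\begin{equation*}
\|U_{g,D,\varepsilon'}(\tau,s)h\|_{p,2}\le e^{C(\tau-s)\|g\|_{\infty,2d+10}}\|h\|_{p,2}
\end{equation*}
uniformly in $\varepsilon'$. This should follow by repeating Lemma~\ref{energy estimate} with $a=2$ for the regularized operators. Conjugating by $\langle k\rangle^{2}$ produces a regularized analogue of $\mathcal R_g$, which is bounded exactly as in Lemma~\ref{boundedness of R}, since the factors $e^{-\varepsilon(\cdot)}\le1$ are harmless. The dissipative part is handled by the Young-inequality argument of Lemma~\ref{dissipative lemma}. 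Combining Steps 1--3 gives, for $h\in L^p_2$,
\begin{equation*}
\|U_{g,D,\varepsilon}(t,s)h-U_{g,D,\varepsilon'}(t,s)h\|_p\lesssim_T \varepsilon\,\|h\|_{p,2}.
\end{equation*}

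\textbf{Step 4 (general $h$).} For $1\le p<\infty$, $L^p_2$ is dense in $L^p$. The propagators are uniformly bounded by (\ref{contraction}), so an $\epsilon/3$ argument yields the Cauchy property for every $h\in L^p$.

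\textbf{Step 5 ($p=\infty$, the main obstacle).} Here density fails, and this is where I expect the real difficulty. I would split $h=h\chi_{|k|\le R}+h\chi_{|k|>R}$; the first piece lies in $L^\infty_2$ and is handled by the weighted estimate above. For the tail I would write each propagator as a Dyson series around the exact multiplication semigroup $\exp\bigl(-\int_s^t\nu_\varepsilon(\tau,k)\,d\tau\bigr)$, where $\nu_\varepsilon$ is the regularized loss coefficient of $Q_{g,D,\varepsilon}$. This coefficient grows in $|k|$ while its regularization error is at most $\varepsilon|k|$. At frequencies $|k|\gtrsim\varepsilon^{-1}$ both exponentials should then be uniformly small, and at intermediate frequencies their difference should be $O(\varepsilon|k|)$ times a damped factor.

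This damping mechanism needs a lower bound on $\nu_\varepsilon$ at high frequency. Since $g\ge0$ may vanish, such a bound is not guaranteed. If it fails, the fallback would be to prove the $p=\infty$ statement for $h$ whose tails are controlled, namely $h\in L^\infty_2$ or $h$ in the $L^\infty$-closure of $L^\infty_2$, which is the class the paper actually uses, since all data fed into the propagators carry the weight $\langle k\rangle^{4d+12}$. That fallback would prove less than the lemma as worded.
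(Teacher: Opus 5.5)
For $1\le p<\infty$, your Steps 1--4 are essentially the paper's proof. Both use Duhamel against the contractive propagator \eqref{contraction}, pay one extra weight for $|e^{-\varepsilon_1x}-e^{-\varepsilon_2x}|\lesssim|\varepsilon_1-\varepsilon_2|x$, use the uniform weighted energy estimate \eqref{cv}, and then remove the tail of $h$. The paper truncates $h$ to $\{|k|<M\}$ with weight $a=3$ rather than using density of $L^p_2$ with weight $2$, which is a cosmetic difference.

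At $p=\infty$ the gap you flag is real, and the paper's proof has it too. Its step \eqref{gh} chooses $M$ with $\|\chi(|k|\ge M)h\|_p<\eta$, which is impossible for $p=\infty$ unless $\operatorname{ess\,sup}_{|k|\ge M}|h|\to0$ (take $h\equiv1$). So the paper's argument proves exactly your fallback: the Cauchy property for $h$ in the $L^\infty$-closure of compactly supported functions. That is the statement one should record. It suffices for the application, though not quite for your stated reason. Corollary~\ref{uniform} applies the weighted propagator to $\langle k\rangle^{4d+12}n_0$, which need not have vanishing tails, so there one has to truncate $n_0$ and pass to the limit with the uniform bounds.

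Your Step 5 would fail, and not because $g$ may vanish. Every term of $Q_{g,D,\varepsilon}$ carries a factor at most $e^{-\varepsilon|k|}$. On the first resonant manifold this holds because $|k_1|+|k_2|\ge|k|$, and the other two factors contain $e^{-\varepsilon|k|}$ explicitly. Hence $|(Q_{g,D,\varepsilon}\phi)(k)|\lesssim e^{-\varepsilon|k|}\langle k\rangle\|g\|_{\infty,2d+10}\|\phi\|_\infty$, and by \eqref{contraction}
\begin{equation*}
\bigl|U_{g,D,\varepsilon}(t,s)h(k)-h(k)\bigr|\lesssim (t-s)\,e^{-\varepsilon|k|}\langle k\rangle\,\|g\|_{\infty,2d+10}\|h\|_\infty .
\end{equation*}
So for $|k|\gg\varepsilon^{-1}\log(1/\varepsilon)$ the $\varepsilon$-propagator is nearly the identity. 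There $\nu_\varepsilon$ has no lower bound even when $g>0$, and $\exp(-\int\nu_\varepsilon)$ is close to $1$, not small. The regularization error in $\nu$ is comparable to $\nu$ itself once $\varepsilon|k|\gtrsim1$, not $O(\varepsilon|k|)$.

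The same display shows that any $L^\infty$-limit $V$ of $U_{g,D,\varepsilon}(t,s)h$ must satisfy $\operatorname{ess\,sup}_{|k|\ge R}|V-h|\to0$ as $R\to\infty$. In other words, the limiting dynamics would have to leave $h$ asymptotically unchanged at high frequency. But the unregularized loss rate is unbounded once $g$ is bounded below on an open set: Proposition~\ref{collision kernel} and the resonance geometry give growth like $|k|^{1/2}$ along a cone of directions. For $h$ a sum of unit bumps at sparse frequencies tending to infinity, this strongly suggests that the unrestricted $p=\infty$ claim is false as worded. No damping argument should be expected to rescue it, so you should prove and state the restricted version.
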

	\begin{proof}
		Without loss of generality, we can assume $s=0$. Fix an arbitrary small $\eta>0$ and $M=M(\eta)\ge 1$, so that 
		\begin{equation}\label{gh}
			\|\chi(|k|\ge M)h\|_p<\eta.
		\end{equation}
		For the truncated solution $h_{M,\varepsilon}(t):= U_{g,D,\varepsilon}(t,0)\left[\chi(|k|< M)h\right]$, we have the following evolutional equation:
		\begin{equation*}
			\partial_t\left(\langle k\rangle^ah_{M,\varepsilon}(t)\right)=\left(Q_{g,D,\varepsilon}(t)+\mathcal{R}_{g,\varepsilon}(t)\right)\left(\langle k\rangle^ah_{M,\varepsilon}(t)\right),
		\end{equation*}
		where the extra operator $\mathcal{R}_{g,\varepsilon}(t)$ is defined as follows:
		\begin{equation*}
			\mathcal{R}_{g,\varepsilon}(t) \varphi:= \iint_{\R^{2d}} e^{-\varepsilon(|k_1|+|k_2|)}8\pi
			|V_{k k_1 k_2}|^2
			\delta(k-k_1-k_2)
			\delta(\omega-\omega_{1}-\omega_{2}) g_1 \dfrac{\langle k\rangle^{a}-\langle k_2\rangle^{a}}{\langle k_2\rangle^{a}} \varphi_2 \chi_{2\ge 1} \; dk_1dk_2
		\end{equation*}
		\begin{equation*}
			+\iint_{\R^{2d}} e^{-\varepsilon(|k|+|k_2|)}4\pi
			|V_{k_1 k k_2}|^2
			\delta(k_1-k-k_2)
			\delta(\omega_1-\omega-\omega_{2}) g_2 \dfrac{\langle k\rangle^{a}-\langle k_1\rangle^{a}}{\langle k_1\rangle^{a}} \varphi_1 \; dk_1dk_2
		\end{equation*}
		\begin{equation*}
			+\iint_{\R^{2d}} e^{-\varepsilon(|k|+|k_1|)}4\pi
			|V_{k_2 k k_1}|^2
			\delta(k_2-k-k_1)
			\delta(\omega_2-\omega-\omega_{1}) g_1 \dfrac{\langle k\rangle^{a}-\langle k_2\rangle^{a}}{\langle k_2\rangle^{a}} \varphi_2 \; dk_1dk_2.
		\end{equation*}
		
Following the proof of Lemma \ref{boundedness of R}, one can check the $L^p$-boundedness
\begin{equation*}
	\|\mathcal{R}_{g,\varepsilon}(t)\|_{p\to p}\lesssim_a \|g\|_{\infty, 2d+10}.
\end{equation*}
Recall that $Q_{g,D,\varepsilon}$ is dissipative; then we obtain the energy estimate:
\begin{equation}\label{cv}
	\|h_{M,\varepsilon}(t)\|_{p,a}\lesssim M^{a}\|h\|_p\exp\left(C_a t \sup_{t\in [0,T]}\|g(t)\|_{\infty,2d+10}\right), \quad \forall t\in [0,T].
\end{equation}

For any $\varepsilon_1,\varepsilon_2>0$, one has the equation:
\begin{equation*}
	\partial_t\left(h_{M,\varepsilon_1}-h_{M,\varepsilon_2}\right)=Q_{g,D,\varepsilon_1}(t)\left(h_{M,\varepsilon_1}-h_{M,\varepsilon_2}\right)+\left(Q_{g,D,\varepsilon_1}-Q_{g,D,\varepsilon_2}\right)h_{M,\varepsilon_2}.
\end{equation*}
Applying Duhamel's formula and the contraction property (\ref{contraction}), we can derive
\begin{equation*}
	\|h_{M,\varepsilon_1}(t)-h_{M,\varepsilon_2}(t)\|_p\le \int_{0}^t \left\|U_{g,D,\varepsilon_1}(t,s)\left[Q_{g,D,\varepsilon_1}(s)-Q_{g,D,\varepsilon_2}(s)\right]h_{M,\varepsilon_2}\right\|_p ds
\end{equation*}
\begin{equation*}
	\le\int_{0}^t \left\|\left[Q_{g,D,\varepsilon_1}(s)-Q_{g,D,\varepsilon_2}(s)\right]h_{M,\varepsilon_2}\right\|_p ds.
\end{equation*}
Note that the difference $Q_{g,D,\varepsilon_1}-Q_{g,D,\varepsilon_2}$ yields the factor
\begin{equation*}
	\big|e^{-\varepsilon_1(|k_i|+|k_j|)}-e^{-\varepsilon_2(|k_i|+|k_j|)}\big|\lesssim |\varepsilon_1-\varepsilon_2|\left(|k|+|k_1|+|k_2|\right).
\end{equation*}
Then, utilizing the same argument as in Lemma \ref{uniform operator} together with the energy estimate (\ref{cv}) with $a=3$, we can further derive
\begin{equation*}
	\|h_{M,\varepsilon_1}(t)-h_{M,\varepsilon_2}(t)\|_p\lesssim |\varepsilon_1-\varepsilon_2|T\left(\sup_{t\in [0,T]}\|g(t)\|_{\infty,2d+10}\right)\|h_{M,\varepsilon_2}\|_{p,3}
\end{equation*}
\begin{equation*}
	\lesssim  |\varepsilon_1-\varepsilon_2|M^{3}T\|h\|_p\left(\sup_{t\in [0,T]}\|g(t)\|_{\infty,2d+10}\right)\exp\left(C_3 T \sup_{t\in [0,T]}\|g(t)\|_{\infty,2d+10}\right).
\end{equation*}

Thus, by the triangle inequality, we obtain
\begin{equation*}
	\|U_{g,D,\varepsilon_1}(t,0)h-U_{g,D,\varepsilon_2}(t,0)h\|_p\le \|h_{M,\varepsilon_1}(t)-h_{M,\varepsilon_2}(t)\|_p +\|U_{g,D,\varepsilon_1}\left(\chi(|k|\ge M)h\right)\|_p
\end{equation*}
\begin{equation*}
	+\|U_{g,D,\varepsilon_2}\left(\chi(|k|\ge M)h\right)\|_p.
\end{equation*}
Combining the $L^p$-contraction (\ref{contraction}) and (\ref{gh}), we can conclude that $\{U_{g,D,\varepsilon}h\}_{\varepsilon>0}$ satisfies the Cauchy criterion.
		
	\end{proof}

		\newpage
		\section*{Acknowledgement}
		The author is grateful to Jalal Shatah for introducing the capillary wave kinetic equation and recommending several relevant references. The author also thanks Yulin Pan and Xiaoxu Wu for carefully reading the manuscript and for their detailed and valuable comments and suggestions, which have significantly improved the presentation of this paper.
		
		\section*{Conflict of interest statement}
		The author does not have any possible conflict of interest.
		
		\section*{Data availability statement}
		The manuscript has no associated data.
		\bigskip
		\bigskip

		\bibliographystyle{alpha}
		\bibliography{capillary}

\newcommand{\etalchar}[1]{$^{#1}$}
\begin{thebibliography}{KMN14}

\bibitem[AGT16]{alonso2016cauchy}
Ricardo Alonso, Irene~M Gamba, and Minh-Binh Tran.
\newblock The cauchy problem for the quantum boltzmann equation for bosons at
  very low temperature.
\newblock {\em arXiv preprint arXiv:1609.07467}, 2016.

\bibitem[DH21]{deng2021derivation}
Yu~Deng and Zaher Hani.
\newblock On the derivation of the wave kinetic equation for nls.
\newblock In {\em Forum of Mathematics, Pi}, volume~9, page~e6. Cambridge
  University Press, 2021.

\bibitem[DH23a]{deng2023full}
Yu~Deng and Zaher Hani.
\newblock Full derivation of the wave kinetic equation.
\newblock {\em Inventiones mathematicae}, 233(2):543--724, 2023.

\bibitem[DH23b]{deng2023long}
Yu~Deng and Zaher Hani.
\newblock Long time justification of wave turbulence theory.
\newblock {\em arXiv preprint arXiv:2311.10082}, 2023.

\bibitem[ET15]{escobedo2015convergence}
Miguel Escobedo and Minh-Binh Tran.
\newblock Convergence to equilibrium of a linearized quantum boltzmann equation
  for bosons at very low temperature.
\newblock {\em Kinetic and Related Models}, 8(3):493--531, 2015.

\bibitem[Eva22]{evans2022partial}
Lawrence~C Evans.
\newblock {\em Partial differential equations}, volume~19.
\newblock American mathematical society, 2022.

\bibitem[Has62]{hasselmann1962non}
Klaus Hasselmann.
\newblock On the non-linear energy transfer in a gravity-wave spectrum part 1.
  general theory.
\newblock {\em Journal of Fluid Mechanics}, 12(4):481--500, 1962.

\bibitem[KMN14]{kolmakov2014wave}
German~V Kolmakov, Peter Vaughan~Elsmere McClintock, and Sergey~V Nazarenko.
\newblock Wave turbulence in quantum fluids.
\newblock {\em Proceedings of the National Academy of Sciences},
  111(supplement\_1):4727--4734, 2014.

\bibitem[Naz11]{nazarenko2011wave}
Sergey Nazarenko.
\newblock {\em Wave turbulence}, volume 825.
\newblock Springer Science \& Business Media, 2011.

\bibitem[NT18]{nguyen2018kinetic}
Toan~T Nguyen and Minh-Binh Tran.
\newblock On the kinetic equation in zakharov's wave turbulence theory for
  capillary waves.
\newblock {\em SIAM Journal on Mathematical Analysis}, 50(2):2020--2047, 2018.

\bibitem[NT19]{nguyen2019uniform}
Toan~T Nguyen and Minh-Binh Tran.
\newblock Uniform in time lower bound for solutions to a quantum boltzmann
  equation of bosons.
\newblock {\em Archive for Rational Mechanics and Analysis}, 231(1):63--89,
  2019.

\bibitem[P{\etalchar{+}}17]{pan2017understanding}
Yulin Pan et~al.
\newblock {\em Understanding of weak turbulence of capillary waves}.
\newblock PhD thesis, Massachusetts Institute of Technology, 2017.

\bibitem[Paz12]{pazy2012semigroups}
Amnon Pazy.
\newblock {\em Semigroups of linear operators and applications to partial
  differential equations}.
\newblock Springer Science \& Business Media, 2012.

\bibitem[Pei29]{peierls1929kinetischen}
R~Peierls.
\newblock Zur kinetischen theorie der w{\"a}rmeleitung in kristallen.
\newblock {\em Annalen der Physik}, 395(8):1055--1101, 1929.

\bibitem[Pei55]{peierls1955quantum}
Rudolf~Ernst Peierls.
\newblock {\em Quantum theory of solids}.
\newblock Oxford university press, 1955.

\bibitem[PW26]{pan2026local}
Yulin Pan and Xiaoxu Wu.
\newblock Local-in-time existence of $l^{1}$ solutions to the gravity water
  wave kinetic equation.
\newblock {\em arXiv preprint arXiv:2603.10882}, 2026.

\bibitem[PY14]{pan2014direct}
Yulin Pan and Dick~KP Yue.
\newblock Direct numerical investigation of turbulence of capillary waves.
\newblock {\em Physical Review Letters}, 113(9):094501, 2014.

\bibitem[PZ00]{pushkarev2000turbulence}
AN~Pushkarev and VE~Zakharov.
\newblock Turbulence of capillary waves¡ªtheory and numerical simulation.
\newblock {\em Physica D: Nonlinear Phenomena}, 135(1-2):98--116, 2000.

\bibitem[Z{\etalchar{+}}72]{zakharov1972collapse}
Vladimir~E Zakharov et~al.
\newblock Collapse of langmuir waves.
\newblock {\em Sov. Phys. JETP}, 35(5):908--914, 1972.

\bibitem[Zak65]{zakharov1965weak}
Vladimir~E Zakharov.
\newblock Weak turbulence in media with a decay spectrum.
\newblock {\em Journal of Applied Mechanics and Technical Physics},
  6(4):22--24, 1965.

\bibitem[Zak99]{zakharov1999statistical}
V~Zakharov.
\newblock Statistical theory of gravity and capillary waves on the surface of a
  finite-depth fluid.
\newblock {\em European journal of mechanics. B, Fluids}, 18(3):327--344, 1999.

\bibitem[ZF67]{zakharov1967weak}
Vladimir~Evgen'evich Zakharov and NN~Filonenko.
\newblock Weak turbulence of capillary waves.
\newblock {\em Journal of applied mechanics and technical physics},
  8(5):37--40, 1967.

\bibitem[ZLF12]{zakharov2012kolmogorov}
Vladimir~E Zakharov, Victor~S L'vov, and Gregory Falkovich.
\newblock {\em Kolmogorov spectra of turbulence I: Wave turbulence}.
\newblock Springer Science \& Business Media, 2012.

\end{thebibliography}

	\end{document}